\providecommand{\U}[1]{\protect\rule{.1in}{.1in}}
\newenvironment{proof}[1][Proof]{\noindent\textbf{#1.} }{\
\rule{0.5em}{0.5em}}
\newtheorem{theorem}{Theorem}
\newtheorem{lemma}[theorem]{Lemma}
\newtheorem{corollary}[theorem]{Corollary}
\newtheorem{example}{Example}
\newtheorem{remark}{Remark}
\begin{document}

\title{Explicit expression for a family of ternary cyclotomic polynomials}
\author{Ala'a Al-Kateeb, Hoon Hong and Eunjeong Lee}
\date{\today}
\maketitle

\begin{abstract}
In this paper, we give an explicit expression for a certain family of ternary
cyclotomic polynomials: specifically $\Phi_{p_{1}p_{2}p_{3}}$, where
$p_{1}<p_{2}<p_{3}$ are odd primes such that $p_{2} \equiv1 \mod p_{1}$ and
$p_{3} \equiv1 \mod {p_{1}p_{2}}$. As an application of the explicit
expression, we give an exact formula for the number of nonzero terms in the
polynomials in the family, which in turn immediately shows that the density
(number of non-zeros terms / degree) is roughly inversely proportional to
$p_{2}$, when $p_{1}$ is sufficiently large.

\end{abstract}

\section{Introduction}

The $n$-th cyclotomic polynomial $\Phi_{n}$ is defined as the monic polynomial
in $\mathbb{Z}[x]$ whose complex roots are the primitive $n$-th~roots of
unity. The cyclotomic polynomials play fundamental roles in number theory and
algebra, with many applications (for instance in
cryptography~\cite{LE2,BW2005,SK,HLL}). Thus they have been extensively
investigated, for instance \cite{BL,BE3,BG1,BZ1,GA-MO,VA,VA2,VA3} on the size of
coefficients, \cite{CA1,BZ4} on the number of non-zero terms,
\cite{HH,Moree2014,Zhang2016,Camburu2016,AH2017} on the maximum size of gaps
in exponents, \cite{AM2,AM1} on efficiently computing coefficients, and so on.

In this paper, we consider the following problem: find an \emph{explicit
expression} for $\Phi_{n}$. By an explicit expression, we mean a polynomial
expression that lists all the terms explicitly. As usual, the problem can be
trivially reduced to the case when $n$ is a product of distinct odd primes.
Thus let us assume, without losing generality, that $n$ is a product of
distinct odd primes, say $n=p_{1}\cdots p_{\ell}$, where $p_{1}<p_{2}%
<\cdots<p_{\ell}$. When $\ell=1$, it is trivial to derive the following
explicit expression
\begin{equation}
\Phi_{p_{1}}=1+x+\cdots+x^{p_{1}-1} \label{l=1}%
\end{equation}
When $\ell=2$, the following expression is hinted or given in
\cite{CA1,BE1,LE1,LL, TH2000,Mor09} using various different notations%
\begin{equation}
\Phi_{p_{1}p_{2}} =\sum_{\substack{0\leq i<s_{1}\\0\leq j<s_{2}}%
}x^{ip_{1}+jp_{2}}\;\;-\;\;\sum_{\substack{0\leq i<p_{2}-s_{1}\\0\leq
j<p_{1}-s_{2}}}x^{ip_{1}+jp_{2}+1} \label{l=2}%
\end{equation}
where $s_{1}\ =p_{1}^{-1}\operatorname{mod}p_{2}\ $ and $s_{2}\ =p_{2}%
^{-1}\operatorname{mod}p_{1}$.
%, equivalently, $p_{1}p_{2}+1=p_{1}\overline{p_{1}}+p_{2}\overline{p_{2}}\ \ $ and $1\leq\overline{p_{1}}\leq p_{2}-1\ $and $1\leq\overline{p_{2}}\leq p_{1}-1$.
The terms in the above expression do not overlap (no cancellation or
accumulation). When $\ell\geq3$, as far as we are aware, there are no general
explicit expressions yet.

The main contribution of this paper is to provide an explicit expression
(Theorem~\ref{thm:exp}) for a certain family of ternary ($\ell=3$) cyclotomic
polynomials: specifically $\Phi_{p_{1}p_{2}p_{3}}$ where%

\begin{equation}
p_{2} \equiv1 \mod p_{1} \ \ \text{and\ \ \ \ } p_{3} \equiv1
\mod {p_{1}p_{2}} \label{family}%
\end{equation}
This family is an interesting one, in that it is flat~\cite{BAC,KA1,KA2},
which means all the coefficients are either $-1$, $0$ or $1$, just like the
above two cases $\ell=1$ and~$\ell=2$.

The provided expression does not have overlapping terms, just like the above
two expressions (\ref{l=1}) and (\ref{l=2}) for the two cases $\ell=1$ and
$\ell=2$. Furthermore, the provided expression is ordered, in that the terms
are ordered in the ascending order in their exponents, just like the
expression (\ref{l=1}) for the case $\ell=1$, but unlike the expression
(\ref{l=2}) for the case $\ell=2$.

Explicit expressions can be useful in extracting various properties of
cyclotomic polynomials, such as coefficients sizes, maximum size of gaps in
exponents (if the expression is also ordered), number of non-zero terms, etc.
In order to illustrate the usefulness, consider the problem of finding an
exact formula for non-zero terms (hamming weight~$\mathrm{hw}$). When $\ell
=1$, it is immediate from the explicit expression~(\ref{l=1}) that%
\[
\mathrm{hw}(\Phi_{p_{1}})=p_{1}%
\]
When $\ell=2$, it is also immediate from the explicit expression~(\ref{l=2})
that
\[
\mathrm{hw}(\Phi_{p_{1}p_{2}})=2\;s_{1}s_{2}-1
\]
(see Carlitz \cite{CA1} for the first but a bit complicated formula). When
$\ell\geq3$, as far as we are aware, there are no general exact formula yet.

In this paper, we derive an exact formula (Corollary \ref{cor:hw}) for
$\mathrm{hw}(\Phi_{p_{1}p_{2}p_{3}})$ for the family of the cyclotomic
polynomials satisfying (\ref{family}), by crucially exploiting the explicit
expression. The exact formula in turn immediately shows that the density
(number of non-zeros terms / degree) is roughly inversely proportional to
$p_{2}$, when $p_{1}$ is sufficiently large (Corollary~\ref{cor:density}).

The paper is structured as follows. In Section \ref{sec:results}, we precisely
state the main result and its corollaries. In Section \ref{sec:proof}, we
prove the main result.

\section{Main Results}

\label{sec:results} In this section, we will give a precise statement of the
main result (Theorem~\ref{thm:exp}) and illustrate its usefulness by a couple
of applications (Corollaries~\ref{cor:hw} and \ref{cor:density}).

\begin{theorem}
[Explicit expression]\label{thm:exp} Let $p_{2}\equiv1\operatorname{mod}p_{1}$
and $p_{3}\equiv1\operatorname{mod}p_{1}p_{2}$. We make two claims.

\begin{enumerate}
\item[\textsf{C1.}] The polynomial $\Phi_{p_{1}p_{2}p_{3}}$ can be explicitly
written as follows.
\begin{align*}
\Phi_{p_{1}p_{2}p_{3}}  &  =\sum_{i\in I}f_{i}\ x^{i\cdot\rho}%
\ \ +\ \ x^{\varphi\left(  p_{1}p_{2}p_{3}\right)  }\\
f_{i}  &  =1+g_{i}-x^{\left(  i_{1}+1\right)  p_{2}}-x^{p_{2}}\ g_{i}\\
g_{i}  &  =\left\{
\begin{array}
[c]{llll}%
\displaystyle+x^{i_{1}p_{2}+i_{2}p_{1}+1}\;\;\;\;\sum_{k=0}^{p_{1}-2-i_{1}} &
x^{k} & \text{if } & i_{3}\leq i_{1}\\
\displaystyle-x^{i_{1}\left(  p_{2}-1\right)  +\left(  i_{2}+1\right)  p_{1}%
}\sum_{k=0}^{i_{1}} & x^{k} & \text{if} & i_{3}>i_{1}%
\end{array}
\right.
\end{align*}
where%
\begin{align*}
I  &  =\left\{  0,\ldots,p_{1}-2\right\}  \times\left\{  0,\ldots
,q_{2}-1\right\}  \times\left\{  0,\ldots,p_{1}-1\right\}  \times\left\{
0,\ldots,q_{3}-1\right\} \\
\rho &  =\left(  \left(  p_{2}-1\right)  \left(  p_{3}-1\right)
,\ p_{1}\left(  p_{3}-1\right)  ,\ p_{3}-1,\ p_{1}p_{2}\right) \\
q_{2}  &  ={\mathrm{quo}}\left(  p_{2},p_{1}\right)  \ \ \ q_{3}%
={\mathrm{quo}}\left(  p_{3},p_{1}p_{2}\right)
\end{align*}

\item[\textsf{C2.}] The expression does not have any overlapping (cancellation
or accumulation)\ of terms and their exponents are ordered in the ascending
order when $\sum_{i\in I}$ is carried out as $\sum_{i_{1}}\sum_{i_{2}}%
\sum_{i_{3}}\sum_{i_{4}}$.
\end{enumerate}
\end{theorem}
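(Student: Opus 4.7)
My plan is to reduce the problem to the standard product identity
$$\Phi_{p_{1}p_{2}p_{3}}(x)\,\Phi_{p_{1}p_{2}}(x) \;=\; \Phi_{p_{1}p_{2}}(x^{p_{3}}),$$
a special case of $\Phi_{np}(x)=\Phi_{n}(x^{p})/\Phi_{n}(x)$ for $p\nmid n$. Claim C1 then reduces to showing that multiplying the explicit right-hand side of the theorem by the known formula (\ref{l=2}) for $\Phi_{p_{1}p_{2}}$ produces $\Phi_{p_{1}p_{2}}(x^{p_{3}})$. Under the hypothesis $p_{2}\equiv 1\bmod p_{1}$, formula (\ref{l=2}) simplifies substantially: $s_{2}=1$, and from $p_{2}=q_{2}p_{1}+1$ one computes $s_{1}=p_{2}-q_{2}$, so
$$\Phi_{p_{1}p_{2}}(x) \;=\; \sum_{i=0}^{p_{2}-q_{2}-1}x^{ip_{1}} \;-\; \sum_{i=0}^{q_{2}-1}\sum_{j=0}^{p_{1}-2}x^{ip_{1}+jp_{2}+1}.$$
This compact two-part form is the key input for everything that follows.

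Before attacking the identity, I would expose the combinatorial structure of the index set $I$ and the scaling $\rho$. Since $p_{3}-1=q_{3}p_{1}p_{2}$, the fourth coordinate $i_{4}$ with weight $p_{1}p_{2}$ tiles the interval $[0,p_{3}-1)$, so $\sum_{i\in I}f_{i}\,x^{i\cdot\rho}$ splits naturally into $q_{3}$ blocks, each contributing to a range of length less than $p_{3}-1$. Within a block, the weights $(p_{2}-1)(p_{3}-1)$, $p_{1}(p_{3}-1)$, $p_{3}-1$ on $i_{1},i_{2},i_{3}$ form a mixed-radix encoding. A useful algebraic rewriting is
$$f_{i} \;=\; (1-x^{p_{2}})(1+g_{i}) \;+\; x^{p_{2}}(1-x^{i_{1}p_{2}}),$$
which exposes the factor $(1-x^{p_{2}})$ together with an ``end correction''; this is what I expect to make the telescoping in the product go through cleanly.

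The core computation for C1 is then to expand the product $F(x)\cdot\Phi_{p_{1}p_{2}}(x)$, handling the two cases $i_{3}\le i_{1}$ and $i_{3}>i_{1}$ separately since $g_{i}$ changes sign and support between them, and checking that the inner geometric sums over $k$ telescope against the negative double sum of $\Phi_{p_{1}p_{2}}(x)$, while the $(1-x^{p_{2}})$ piece telescopes against its positive part. The remaining terms should be exactly the monomials $x^{ip_{1}p_{3}}$ and $-x^{ip_{1}p_{3}+jp_{2}p_{3}+p_{3}}$ of $\Phi_{p_{1}p_{2}}(x^{p_{3}})$. This bookkeeping of four nested indices against two is the main obstacle of the proof, and I would organize it by summing $i_{3}$ first (to kill the geometric pieces inside $g_{i}$), then $i_{2}$, then $i_{1}$, then $i_{4}$.

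For C2, the plan is to bound the exponents contributed by each $f_{i}\,x^{i\cdot\rho}$ and to show that these ranges are disjoint and ordered in lex order on $(i_{1},i_{2},i_{3},i_{4})$. A direct estimate, using $q_{2}p_{1}=p_{2}-1$ and the bounds $i_{1}\le p_{1}-2$, $i_{2}\le q_{2}-1$, shows the exponent spread of $f_{i}$ lies strictly below $p_{1}p_{2}$, which matches the step $p_{1}p_{2}$ of the smallest coordinate $i_{4}$; incrementing $i_{3}$ jumps by $p_{3}-1=q_{3}p_{1}p_{2}$, which exceeds the combined contribution of all $i_{4}$ inside a block, and similarly the $i_{2}$- and $i_{1}$-steps dominate all lower-level contributions. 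A final check that the trailing monomial $x^{\varphi(p_{1}p_{2}p_{3})}$ matches the top exponent of the last block completes the ordering claim; this part is careful but mechanical once the bounds above are in hand.
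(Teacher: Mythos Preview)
Your approach to \textsf{C1} is genuinely different from the paper's. The paper \emph{derives} the formula: it expands $\Phi_{p_1p_2p_3}=\Phi_{p_1p_2}(x^{p_3})/\Phi_{p_1p_2}$ as a formal series via $1/\Phi_{p_1p_2}=-\Psi_{p_1p_2}\cdot\sum_{t\ge 0}x^{tp_1p_2}$, reads off each block as $f_i=-\Psi_{p_1p_2}\cdot\mathcal{T}_{u+1}\Phi_{p_1p_2}$ with $u=i_1(p_2-1)+i_2p_1+i_3$, and then simplifies this single product. You instead propose to \emph{verify} the formula by multiplying the claimed $F(x)$ by $\Phi_{p_1p_2}(x)$ and checking the result equals $\Phi_{p_1p_2}(x^{p_3})$. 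Both routes are legitimate; the paper's has the advantage that the computation is localized to one block at a time (a product of a polynomial of degree $<p_1p_2$ by $\Psi_{p_1p_2}$), whereas yours must manage cancellation across all of $I$ simultaneously. Your factorization $f_i=(1-x^{p_2})(1+g_i)+x^{p_2}(1-x^{i_1p_2})$ is correct and a good start, but the sentence ``the inner geometric sums over $k$ telescope against the negative double sum of $\Phi_{p_1p_2}$'' is doing a lot of unspecified work: after summing over $i_3$ you still have two distinct block shapes (one for each branch of $g_i$) appearing with weights $\sum_{i_3=0}^{i_1}x^{i_3(p_3-1)}$ and $\sum_{i_3=i_1+1}^{p_1-1}x^{i_3(p_3-1)}$, and it is not clear from your outline how these interact with the two parts of $\Phi_{p_1p_2}$ to produce exactly the monomials of $\Phi_{p_1p_2}(x^{p_3})$. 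This step needs to actually be carried out, not asserted.

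Your plan for \textsf{C2} has a real gap. You argue only that the supports of the shifted blocks $f_i\,x^{i\cdot\rho}$ are pairwise disjoint and lex-ordered in $i$; that is the inter-block part, and your degree estimate $\deg f_i<p_1p_2=\rho_4$ together with the mixed-radix inequalities handles it. But \textsf{C2} asserts that \emph{all} terms of the expression are distinct and ordered, which also requires that inside each $f_i$ the four pieces
\[
S_1=1,\qquad S_2=g_i,\qquad S_3=-x^{(i_1+1)p_2},\qquad S_4=-x^{p_2}g_i
\]
have non-overlapping, increasing exponent ranges. This intra-block check is exactly what the paper isolates (its Lemma on non-overlapping and ordered $f_i$), and it is not automatic: one must verify, in each of the two cases for $g_i$, inequalities such as $\mathrm{tdeg}(S_3)>\deg(S_2)$, i.e.\ $(i_1+1)p_2>\deg g_i$, which uses $i_2\le q_2-1$ and $q_2p_1=p_2-1$. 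Without this, your argument would still allow cancellation or accumulation within a single $f_i$. Add this intra-block analysis and your \textsf{C2} argument will be complete.
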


\begin{example}
\label{exm:exp}We will illustrate Theorem \ref{thm:exp} by using a small
example: $p_{1}=3,\;p_{2}=13$ and $p_{3}=79$. It is obvious that $p_{2}%
\equiv1\operatorname{mod}p_{1}$ and $p_{3}\equiv1\operatorname{mod}p_{1}p_{2}$.

\begin{enumerate}
\item[\textsf{C1.}] An explicit expression for $\Phi_{3\cdot13\cdot79}$ can be
obtained from Theorem \ref{thm:exp}-\textsf{C1} as follows. Note that
$q_{2}=4$ and $q_{3}=2$. Hence
\[
\Phi_{3\cdot13\cdot79}=\sum_{i_{1}=0}^{1}\sum_{i_{2}=0}^{3}\sum_{i_{3}=0}%
^{2}\sum_{i_{4}=0}^{1}\ f_{i}\ \ x^{i_{1}\rho_{1}+i_{2}\rho_{2}+i_{3}\rho
_{3}+i_{4}\rho_{4}}\ \ +\ \ x^{\varphi\left(  3\cdot13\cdot79\right)  }%
\]
where%
\[
\rho=\left(  \left(  13-1\right)  \left(  79-1\right)  ,\ 3\cdot\left(
79-1\right)  ,\ 79-1,\ 3\cdot13\right)  =\left(  936,234,78,39\right)
\]
In other words, the polynomial $\Phi_{3\cdot13\cdot79}$ can be represented
under the multi-radices $\rho$ as
\[
\Phi_{3\cdot13\cdot79}=%
\begin{array}
[c]{|c|c|c|c|c|c|c|c|c|c|c|c|}\hline
f_{0000} & f_{0001} & f_{0010} & f_{0011} & f_{0020} & f_{0021} & f_{0100} &
f_{0101} & \cdots & f_{1320} & f_{1321} & 1\\\hline
\end{array}
\]
where the \textquotedblleft digits\textquotedblright\ $f_{i}$'s are given by
\definecolor{Color1}{rgb}{1.0, 0.0,0.0}
\definecolor{Color2}{rgb}{0.0, 1.0,0.0}
\definecolor{Color3}{rgb}{1.0, 0.0,1.0}
\definecolor{Color4}{rgb}{0.0, 1.0,1.0}
{\tiny\[
\begin{array}{|rrrr|l|} \hline
i_1&i_2&i_3&i_4&                                                                                                 f_i\\ \hline
\hline  0&  0&  0&  0&
{\psset{unit=0.15} \pspicture[shift=*](0,0)(39,0.45)
\psframe[linewidth=0.1pt](0.0,0)(39,1)
\psframe[linewidth=0.1pt,fillstyle=solid,fillcolor=Color1](0,0)(1,1)
\psline[linewidth=0.4pt,linecolor=white](.2,0.5)(.8,0.5)
\psline[linewidth=0.4pt,linecolor=white](.5,0.2)(.5,0.8)
\psframe[linewidth=0.1pt,fillstyle=solid,fillcolor=Color2](1,0)(3,1)
\psline[linewidth=0.4pt,linecolor=white](1.2,0.5)(1.8,0.5)
\psline[linewidth=0.4pt,linecolor=white](1.5,0.2)(1.5,0.8)
\psline[linewidth=0.4pt,linecolor=white](2.2,0.5)(2.8,0.5)
\psline[linewidth=0.4pt,linecolor=white](2.5,0.2)(2.5,0.8)
\psframe[linewidth=0.1pt,fillstyle=solid,fillcolor=Color3](13,0)(14,1)
\psline[linewidth=0.4pt,linecolor=white](13.2,0.5)(13.8,0.5)
\psframe[linewidth=0.1pt,fillstyle=solid,fillcolor=Color4](14,0)(16,1)
\psline[linewidth=0.4pt,linecolor=white](14.2,0.5)(14.8,0.5)
\psline[linewidth=0.4pt,linecolor=white](15.2,0.5)(15.8,0.5)
\multips(0,0)(1,0){39}{\psline[linewidth=0.1pt](0,0)(0,1)}
\endpspicture }\\
  0&  0&  0&  1&
{\psset{unit=0.15} \pspicture[shift=*](0,0)(39,0.45)
\psframe[linewidth=0.1pt](0.0,0)(39,1)
\psframe[linewidth=0.1pt,fillstyle=solid,fillcolor=Color1](0,0)(1,1)
\psline[linewidth=0.4pt,linecolor=white](.2,0.5)(.8,0.5)
\psline[linewidth=0.4pt,linecolor=white](.5,0.2)(.5,0.8)
\psframe[linewidth=0.1pt,fillstyle=solid,fillcolor=Color2](1,0)(3,1)
\psline[linewidth=0.4pt,linecolor=white](1.2,0.5)(1.8,0.5)
\psline[linewidth=0.4pt,linecolor=white](1.5,0.2)(1.5,0.8)
\psline[linewidth=0.4pt,linecolor=white](2.2,0.5)(2.8,0.5)
\psline[linewidth=0.4pt,linecolor=white](2.5,0.2)(2.5,0.8)
\psframe[linewidth=0.1pt,fillstyle=solid,fillcolor=Color3](13,0)(14,1)
\psline[linewidth=0.4pt,linecolor=white](13.2,0.5)(13.8,0.5)
\psframe[linewidth=0.1pt,fillstyle=solid,fillcolor=Color4](14,0)(16,1)
\psline[linewidth=0.4pt,linecolor=white](14.2,0.5)(14.8,0.5)
\psline[linewidth=0.4pt,linecolor=white](15.2,0.5)(15.8,0.5)
\multips(0,0)(1,0){39}{\psline[linewidth=0.1pt](0,0)(0,1)}
\endpspicture }\\
\hline  0&  0&  1&  0&
{\psset{unit=0.15} \pspicture[shift=*](0,0)(39,0.45)
\psframe[linewidth=0.1pt](0.0,0)(39,1)
\psframe[linewidth=0.1pt,fillstyle=solid,fillcolor=Color1](0,0)(1,1)
\psline[linewidth=0.4pt,linecolor=white](.2,0.5)(.8,0.5)
\psline[linewidth=0.4pt,linecolor=white](.5,0.2)(.5,0.8)
\psframe[linewidth=0.1pt,fillstyle=solid,fillcolor=Color2](3,0)(4,1)
\psline[linewidth=0.4pt,linecolor=white](3.2,0.5)(3.8,0.5)
\psframe[linewidth=0.1pt,fillstyle=solid,fillcolor=Color3](13,0)(14,1)
\psline[linewidth=0.4pt,linecolor=white](13.2,0.5)(13.8,0.5)
\psframe[linewidth=0.1pt,fillstyle=solid,fillcolor=Color4](16,0)(17,1)
\psline[linewidth=0.4pt,linecolor=white](16.2,0.5)(16.8,0.5)
\psline[linewidth=0.4pt,linecolor=white](16.5,0.2)(16.5,0.8)
\multips(0,0)(1,0){39}{\psline[linewidth=0.1pt](0,0)(0,1)}
\endpspicture }\\
  0&  0&  1&  1&
{\psset{unit=0.15} \pspicture[shift=*](0,0)(39,0.45)
\psframe[linewidth=0.1pt](0.0,0)(39,1)
\psframe[linewidth=0.1pt,fillstyle=solid,fillcolor=Color1](0,0)(1,1)
\psline[linewidth=0.4pt,linecolor=white](.2,0.5)(.8,0.5)
\psline[linewidth=0.4pt,linecolor=white](.5,0.2)(.5,0.8)
\psframe[linewidth=0.1pt,fillstyle=solid,fillcolor=Color2](3,0)(4,1)
\psline[linewidth=0.4pt,linecolor=white](3.2,0.5)(3.8,0.5)
\psframe[linewidth=0.1pt,fillstyle=solid,fillcolor=Color3](13,0)(14,1)
\psline[linewidth=0.4pt,linecolor=white](13.2,0.5)(13.8,0.5)
\psframe[linewidth=0.1pt,fillstyle=solid,fillcolor=Color4](16,0)(17,1)
\psline[linewidth=0.4pt,linecolor=white](16.2,0.5)(16.8,0.5)
\psline[linewidth=0.4pt,linecolor=white](16.5,0.2)(16.5,0.8)
\multips(0,0)(1,0){39}{\psline[linewidth=0.1pt](0,0)(0,1)}
\endpspicture }\\
  0&  0&  2&  0&
{\psset{unit=0.15} \pspicture[shift=*](0,0)(39,0.45)
\psframe[linewidth=0.1pt](0.0,0)(39,1)
\psframe[linewidth=0.1pt,fillstyle=solid,fillcolor=Color1](0,0)(1,1)
\psline[linewidth=0.4pt,linecolor=white](.2,0.5)(.8,0.5)
\psline[linewidth=0.4pt,linecolor=white](.5,0.2)(.5,0.8)
\psframe[linewidth=0.1pt,fillstyle=solid,fillcolor=Color2](3,0)(4,1)
\psline[linewidth=0.4pt,linecolor=white](3.2,0.5)(3.8,0.5)
\psframe[linewidth=0.1pt,fillstyle=solid,fillcolor=Color3](13,0)(14,1)
\psline[linewidth=0.4pt,linecolor=white](13.2,0.5)(13.8,0.5)
\psframe[linewidth=0.1pt,fillstyle=solid,fillcolor=Color4](16,0)(17,1)
\psline[linewidth=0.4pt,linecolor=white](16.2,0.5)(16.8,0.5)
\psline[linewidth=0.4pt,linecolor=white](16.5,0.2)(16.5,0.8)
\multips(0,0)(1,0){39}{\psline[linewidth=0.1pt](0,0)(0,1)}
\endpspicture }\\
  0&  0&  2&  1&
{\psset{unit=0.15} \pspicture[shift=*](0,0)(39,0.45)
\psframe[linewidth=0.1pt](0.0,0)(39,1)
\psframe[linewidth=0.1pt,fillstyle=solid,fillcolor=Color1](0,0)(1,1)
\psline[linewidth=0.4pt,linecolor=white](.2,0.5)(.8,0.5)
\psline[linewidth=0.4pt,linecolor=white](.5,0.2)(.5,0.8)
\psframe[linewidth=0.1pt,fillstyle=solid,fillcolor=Color2](3,0)(4,1)
\psline[linewidth=0.4pt,linecolor=white](3.2,0.5)(3.8,0.5)
\psframe[linewidth=0.1pt,fillstyle=solid,fillcolor=Color3](13,0)(14,1)
\psline[linewidth=0.4pt,linecolor=white](13.2,0.5)(13.8,0.5)
\psframe[linewidth=0.1pt,fillstyle=solid,fillcolor=Color4](16,0)(17,1)
\psline[linewidth=0.4pt,linecolor=white](16.2,0.5)(16.8,0.5)
\psline[linewidth=0.4pt,linecolor=white](16.5,0.2)(16.5,0.8)
\multips(0,0)(1,0){39}{\psline[linewidth=0.1pt](0,0)(0,1)}
\endpspicture }\\
\hline  0&  1&  0&  0&
{\psset{unit=0.15} \pspicture[shift=*](0,0)(39,0.45)
\psframe[linewidth=0.1pt](0.0,0)(39,1)
\psframe[linewidth=0.1pt,fillstyle=solid,fillcolor=Color1](0,0)(1,1)
\psline[linewidth=0.4pt,linecolor=white](.2,0.5)(.8,0.5)
\psline[linewidth=0.4pt,linecolor=white](.5,0.2)(.5,0.8)
\psframe[linewidth=0.1pt,fillstyle=solid,fillcolor=Color2](4,0)(6,1)
\psline[linewidth=0.4pt,linecolor=white](4.2,0.5)(4.8,0.5)
\psline[linewidth=0.4pt,linecolor=white](4.5,0.2)(4.5,0.8)
\psline[linewidth=0.4pt,linecolor=white](5.2,0.5)(5.8,0.5)
\psline[linewidth=0.4pt,linecolor=white](5.5,0.2)(5.5,0.8)
\psframe[linewidth=0.1pt,fillstyle=solid,fillcolor=Color3](13,0)(14,1)
\psline[linewidth=0.4pt,linecolor=white](13.2,0.5)(13.8,0.5)
\psframe[linewidth=0.1pt,fillstyle=solid,fillcolor=Color4](17,0)(19,1)
\psline[linewidth=0.4pt,linecolor=white](17.2,0.5)(17.8,0.5)
\psline[linewidth=0.4pt,linecolor=white](18.2,0.5)(18.8,0.5)
\multips(0,0)(1,0){39}{\psline[linewidth=0.1pt](0,0)(0,1)}
\endpspicture }\\
  0&  1&  0&  1&
{\psset{unit=0.15} \pspicture[shift=*](0,0)(39,0.45)
\psframe[linewidth=0.1pt](0.0,0)(39,1)
\psframe[linewidth=0.1pt,fillstyle=solid,fillcolor=Color1](0,0)(1,1)
\psline[linewidth=0.4pt,linecolor=white](.2,0.5)(.8,0.5)
\psline[linewidth=0.4pt,linecolor=white](.5,0.2)(.5,0.8)
\psframe[linewidth=0.1pt,fillstyle=solid,fillcolor=Color2](4,0)(6,1)
\psline[linewidth=0.4pt,linecolor=white](4.2,0.5)(4.8,0.5)
\psline[linewidth=0.4pt,linecolor=white](4.5,0.2)(4.5,0.8)
\psline[linewidth=0.4pt,linecolor=white](5.2,0.5)(5.8,0.5)
\psline[linewidth=0.4pt,linecolor=white](5.5,0.2)(5.5,0.8)
\psframe[linewidth=0.1pt,fillstyle=solid,fillcolor=Color3](13,0)(14,1)
\psline[linewidth=0.4pt,linecolor=white](13.2,0.5)(13.8,0.5)
\psframe[linewidth=0.1pt,fillstyle=solid,fillcolor=Color4](17,0)(19,1)
\psline[linewidth=0.4pt,linecolor=white](17.2,0.5)(17.8,0.5)
\psline[linewidth=0.4pt,linecolor=white](18.2,0.5)(18.8,0.5)
\multips(0,0)(1,0){39}{\psline[linewidth=0.1pt](0,0)(0,1)}
\endpspicture }\\
\hline  0&  1&  1&  0&
{\psset{unit=0.15} \pspicture[shift=*](0,0)(39,0.45)
\psframe[linewidth=0.1pt](0.0,0)(39,1)
\psframe[linewidth=0.1pt,fillstyle=solid,fillcolor=Color1](0,0)(1,1)
\psline[linewidth=0.4pt,linecolor=white](.2,0.5)(.8,0.5)
\psline[linewidth=0.4pt,linecolor=white](.5,0.2)(.5,0.8)
\psframe[linewidth=0.1pt,fillstyle=solid,fillcolor=Color2](6,0)(7,1)
\psline[linewidth=0.4pt,linecolor=white](6.2,0.5)(6.8,0.5)
\psframe[linewidth=0.1pt,fillstyle=solid,fillcolor=Color3](13,0)(14,1)
\psline[linewidth=0.4pt,linecolor=white](13.2,0.5)(13.8,0.5)
\psframe[linewidth=0.1pt,fillstyle=solid,fillcolor=Color4](19,0)(20,1)
\psline[linewidth=0.4pt,linecolor=white](19.2,0.5)(19.8,0.5)
\psline[linewidth=0.4pt,linecolor=white](19.5,0.2)(19.5,0.8)
\multips(0,0)(1,0){39}{\psline[linewidth=0.1pt](0,0)(0,1)}
\endpspicture }\\
  0&  1&  1&  1&
{\psset{unit=0.15} \pspicture[shift=*](0,0)(39,0.45)
\psframe[linewidth=0.1pt](0.0,0)(39,1)
\psframe[linewidth=0.1pt,fillstyle=solid,fillcolor=Color1](0,0)(1,1)
\psline[linewidth=0.4pt,linecolor=white](.2,0.5)(.8,0.5)
\psline[linewidth=0.4pt,linecolor=white](.5,0.2)(.5,0.8)
\psframe[linewidth=0.1pt,fillstyle=solid,fillcolor=Color2](6,0)(7,1)
\psline[linewidth=0.4pt,linecolor=white](6.2,0.5)(6.8,0.5)
\psframe[linewidth=0.1pt,fillstyle=solid,fillcolor=Color3](13,0)(14,1)
\psline[linewidth=0.4pt,linecolor=white](13.2,0.5)(13.8,0.5)
\psframe[linewidth=0.1pt,fillstyle=solid,fillcolor=Color4](19,0)(20,1)
\psline[linewidth=0.4pt,linecolor=white](19.2,0.5)(19.8,0.5)
\psline[linewidth=0.4pt,linecolor=white](19.5,0.2)(19.5,0.8)
\multips(0,0)(1,0){39}{\psline[linewidth=0.1pt](0,0)(0,1)}
\endpspicture }\\
  0&  1&  2&  0&
{\psset{unit=0.15} \pspicture[shift=*](0,0)(39,0.45)
\psframe[linewidth=0.1pt](0.0,0)(39,1)
\psframe[linewidth=0.1pt,fillstyle=solid,fillcolor=Color1](0,0)(1,1)
\psline[linewidth=0.4pt,linecolor=white](.2,0.5)(.8,0.5)
\psline[linewidth=0.4pt,linecolor=white](.5,0.2)(.5,0.8)
\psframe[linewidth=0.1pt,fillstyle=solid,fillcolor=Color2](6,0)(7,1)
\psline[linewidth=0.4pt,linecolor=white](6.2,0.5)(6.8,0.5)
\psframe[linewidth=0.1pt,fillstyle=solid,fillcolor=Color3](13,0)(14,1)
\psline[linewidth=0.4pt,linecolor=white](13.2,0.5)(13.8,0.5)
\psframe[linewidth=0.1pt,fillstyle=solid,fillcolor=Color4](19,0)(20,1)
\psline[linewidth=0.4pt,linecolor=white](19.2,0.5)(19.8,0.5)
\psline[linewidth=0.4pt,linecolor=white](19.5,0.2)(19.5,0.8)
\multips(0,0)(1,0){39}{\psline[linewidth=0.1pt](0,0)(0,1)}
\endpspicture }\\
  0&  1&  2&  1&
{\psset{unit=0.15} \pspicture[shift=*](0,0)(39,0.45)
\psframe[linewidth=0.1pt](0.0,0)(39,1)
\psframe[linewidth=0.1pt,fillstyle=solid,fillcolor=Color1](0,0)(1,1)
\psline[linewidth=0.4pt,linecolor=white](.2,0.5)(.8,0.5)
\psline[linewidth=0.4pt,linecolor=white](.5,0.2)(.5,0.8)
\psframe[linewidth=0.1pt,fillstyle=solid,fillcolor=Color2](6,0)(7,1)
\psline[linewidth=0.4pt,linecolor=white](6.2,0.5)(6.8,0.5)
\psframe[linewidth=0.1pt,fillstyle=solid,fillcolor=Color3](13,0)(14,1)
\psline[linewidth=0.4pt,linecolor=white](13.2,0.5)(13.8,0.5)
\psframe[linewidth=0.1pt,fillstyle=solid,fillcolor=Color4](19,0)(20,1)
\psline[linewidth=0.4pt,linecolor=white](19.2,0.5)(19.8,0.5)
\psline[linewidth=0.4pt,linecolor=white](19.5,0.2)(19.5,0.8)
\multips(0,0)(1,0){39}{\psline[linewidth=0.1pt](0,0)(0,1)}
\endpspicture }\\
\hline  0&  2&  0&  0&
{\psset{unit=0.15} \pspicture[shift=*](0,0)(39,0.45)
\psframe[linewidth=0.1pt](0.0,0)(39,1)
\psframe[linewidth=0.1pt,fillstyle=solid,fillcolor=Color1](0,0)(1,1)
\psline[linewidth=0.4pt,linecolor=white](.2,0.5)(.8,0.5)
\psline[linewidth=0.4pt,linecolor=white](.5,0.2)(.5,0.8)
\psframe[linewidth=0.1pt,fillstyle=solid,fillcolor=Color2](7,0)(9,1)
\psline[linewidth=0.4pt,linecolor=white](7.2,0.5)(7.8,0.5)
\psline[linewidth=0.4pt,linecolor=white](7.5,0.2)(7.5,0.8)
\psline[linewidth=0.4pt,linecolor=white](8.2,0.5)(8.8,0.5)
\psline[linewidth=0.4pt,linecolor=white](8.5,0.2)(8.5,0.8)
\psframe[linewidth=0.1pt,fillstyle=solid,fillcolor=Color3](13,0)(14,1)
\psline[linewidth=0.4pt,linecolor=white](13.2,0.5)(13.8,0.5)
\psframe[linewidth=0.1pt,fillstyle=solid,fillcolor=Color4](20,0)(22,1)
\psline[linewidth=0.4pt,linecolor=white](20.2,0.5)(20.8,0.5)
\psline[linewidth=0.4pt,linecolor=white](21.2,0.5)(21.8,0.5)
\multips(0,0)(1,0){39}{\psline[linewidth=0.1pt](0,0)(0,1)}
\endpspicture }\\
  0&  2&  0&  1&
{\psset{unit=0.15} \pspicture[shift=*](0,0)(39,0.45)
\psframe[linewidth=0.1pt](0.0,0)(39,1)
\psframe[linewidth=0.1pt,fillstyle=solid,fillcolor=Color1](0,0)(1,1)
\psline[linewidth=0.4pt,linecolor=white](.2,0.5)(.8,0.5)
\psline[linewidth=0.4pt,linecolor=white](.5,0.2)(.5,0.8)
\psframe[linewidth=0.1pt,fillstyle=solid,fillcolor=Color2](7,0)(9,1)
\psline[linewidth=0.4pt,linecolor=white](7.2,0.5)(7.8,0.5)
\psline[linewidth=0.4pt,linecolor=white](7.5,0.2)(7.5,0.8)
\psline[linewidth=0.4pt,linecolor=white](8.2,0.5)(8.8,0.5)
\psline[linewidth=0.4pt,linecolor=white](8.5,0.2)(8.5,0.8)
\psframe[linewidth=0.1pt,fillstyle=solid,fillcolor=Color3](13,0)(14,1)
\psline[linewidth=0.4pt,linecolor=white](13.2,0.5)(13.8,0.5)
\psframe[linewidth=0.1pt,fillstyle=solid,fillcolor=Color4](20,0)(22,1)
\psline[linewidth=0.4pt,linecolor=white](20.2,0.5)(20.8,0.5)
\psline[linewidth=0.4pt,linecolor=white](21.2,0.5)(21.8,0.5)
\multips(0,0)(1,0){39}{\psline[linewidth=0.1pt](0,0)(0,1)}
\endpspicture }\\
\hline  0&  2&  1&  0&
{\psset{unit=0.15} \pspicture[shift=*](0,0)(39,0.45)
\psframe[linewidth=0.1pt](0.0,0)(39,1)
\psframe[linewidth=0.1pt,fillstyle=solid,fillcolor=Color1](0,0)(1,1)
\psline[linewidth=0.4pt,linecolor=white](.2,0.5)(.8,0.5)
\psline[linewidth=0.4pt,linecolor=white](.5,0.2)(.5,0.8)
\psframe[linewidth=0.1pt,fillstyle=solid,fillcolor=Color2](9,0)(10,1)
\psline[linewidth=0.4pt,linecolor=white](9.2,0.5)(9.8,0.5)
\psframe[linewidth=0.1pt,fillstyle=solid,fillcolor=Color3](13,0)(14,1)
\psline[linewidth=0.4pt,linecolor=white](13.2,0.5)(13.8,0.5)
\psframe[linewidth=0.1pt,fillstyle=solid,fillcolor=Color4](22,0)(23,1)
\psline[linewidth=0.4pt,linecolor=white](22.2,0.5)(22.8,0.5)
\psline[linewidth=0.4pt,linecolor=white](22.5,0.2)(22.5,0.8)
\multips(0,0)(1,0){39}{\psline[linewidth=0.1pt](0,0)(0,1)}
\endpspicture }\\
  0&  2&  1&  1&
{\psset{unit=0.15} \pspicture[shift=*](0,0)(39,0.45)
\psframe[linewidth=0.1pt](0.0,0)(39,1)
\psframe[linewidth=0.1pt,fillstyle=solid,fillcolor=Color1](0,0)(1,1)
\psline[linewidth=0.4pt,linecolor=white](.2,0.5)(.8,0.5)
\psline[linewidth=0.4pt,linecolor=white](.5,0.2)(.5,0.8)
\psframe[linewidth=0.1pt,fillstyle=solid,fillcolor=Color2](9,0)(10,1)
\psline[linewidth=0.4pt,linecolor=white](9.2,0.5)(9.8,0.5)
\psframe[linewidth=0.1pt,fillstyle=solid,fillcolor=Color3](13,0)(14,1)
\psline[linewidth=0.4pt,linecolor=white](13.2,0.5)(13.8,0.5)
\psframe[linewidth=0.1pt,fillstyle=solid,fillcolor=Color4](22,0)(23,1)
\psline[linewidth=0.4pt,linecolor=white](22.2,0.5)(22.8,0.5)
\psline[linewidth=0.4pt,linecolor=white](22.5,0.2)(22.5,0.8)
\multips(0,0)(1,0){39}{\psline[linewidth=0.1pt](0,0)(0,1)}
\endpspicture }\\
  0&  2&  2&  0&
{\psset{unit=0.15} \pspicture[shift=*](0,0)(39,0.45)
\psframe[linewidth=0.1pt](0.0,0)(39,1)
\psframe[linewidth=0.1pt,fillstyle=solid,fillcolor=Color1](0,0)(1,1)
\psline[linewidth=0.4pt,linecolor=white](.2,0.5)(.8,0.5)
\psline[linewidth=0.4pt,linecolor=white](.5,0.2)(.5,0.8)
\psframe[linewidth=0.1pt,fillstyle=solid,fillcolor=Color2](9,0)(10,1)
\psline[linewidth=0.4pt,linecolor=white](9.2,0.5)(9.8,0.5)
\psframe[linewidth=0.1pt,fillstyle=solid,fillcolor=Color3](13,0)(14,1)
\psline[linewidth=0.4pt,linecolor=white](13.2,0.5)(13.8,0.5)
\psframe[linewidth=0.1pt,fillstyle=solid,fillcolor=Color4](22,0)(23,1)
\psline[linewidth=0.4pt,linecolor=white](22.2,0.5)(22.8,0.5)
\psline[linewidth=0.4pt,linecolor=white](22.5,0.2)(22.5,0.8)
\multips(0,0)(1,0){39}{\psline[linewidth=0.1pt](0,0)(0,1)}
\endpspicture }\\
  0&  2&  2&  1&
{\psset{unit=0.15} \pspicture[shift=*](0,0)(39,0.45)
\psframe[linewidth=0.1pt](0.0,0)(39,1)
\psframe[linewidth=0.1pt,fillstyle=solid,fillcolor=Color1](0,0)(1,1)
\psline[linewidth=0.4pt,linecolor=white](.2,0.5)(.8,0.5)
\psline[linewidth=0.4pt,linecolor=white](.5,0.2)(.5,0.8)
\psframe[linewidth=0.1pt,fillstyle=solid,fillcolor=Color2](9,0)(10,1)
\psline[linewidth=0.4pt,linecolor=white](9.2,0.5)(9.8,0.5)
\psframe[linewidth=0.1pt,fillstyle=solid,fillcolor=Color3](13,0)(14,1)
\psline[linewidth=0.4pt,linecolor=white](13.2,0.5)(13.8,0.5)
\psframe[linewidth=0.1pt,fillstyle=solid,fillcolor=Color4](22,0)(23,1)
\psline[linewidth=0.4pt,linecolor=white](22.2,0.5)(22.8,0.5)
\psline[linewidth=0.4pt,linecolor=white](22.5,0.2)(22.5,0.8)
\multips(0,0)(1,0){39}{\psline[linewidth=0.1pt](0,0)(0,1)}
\endpspicture }\\
\hline  0&  3&  0&  0&
{\psset{unit=0.15} \pspicture[shift=*](0,0)(39,0.45)
\psframe[linewidth=0.1pt](0.0,0)(39,1)
\psframe[linewidth=0.1pt,fillstyle=solid,fillcolor=Color1](0,0)(1,1)
\psline[linewidth=0.4pt,linecolor=white](.2,0.5)(.8,0.5)
\psline[linewidth=0.4pt,linecolor=white](.5,0.2)(.5,0.8)
\psframe[linewidth=0.1pt,fillstyle=solid,fillcolor=Color2](10,0)(12,1)
\psline[linewidth=0.4pt,linecolor=white](10.2,0.5)(10.8,0.5)
\psline[linewidth=0.4pt,linecolor=white](10.5,0.2)(10.5,0.8)
\psline[linewidth=0.4pt,linecolor=white](11.2,0.5)(11.8,0.5)
\psline[linewidth=0.4pt,linecolor=white](11.5,0.2)(11.5,0.8)
\psframe[linewidth=0.1pt,fillstyle=solid,fillcolor=Color3](13,0)(14,1)
\psline[linewidth=0.4pt,linecolor=white](13.2,0.5)(13.8,0.5)
\psframe[linewidth=0.1pt,fillstyle=solid,fillcolor=Color4](23,0)(25,1)
\psline[linewidth=0.4pt,linecolor=white](23.2,0.5)(23.8,0.5)
\psline[linewidth=0.4pt,linecolor=white](24.2,0.5)(24.8,0.5)
\multips(0,0)(1,0){39}{\psline[linewidth=0.1pt](0,0)(0,1)}
\endpspicture }\\
  0&  3&  0&  1&
{\psset{unit=0.15} \pspicture[shift=*](0,0)(39,0.45)
\psframe[linewidth=0.1pt](0.0,0)(39,1)
\psframe[linewidth=0.1pt,fillstyle=solid,fillcolor=Color1](0,0)(1,1)
\psline[linewidth=0.4pt,linecolor=white](.2,0.5)(.8,0.5)
\psline[linewidth=0.4pt,linecolor=white](.5,0.2)(.5,0.8)
\psframe[linewidth=0.1pt,fillstyle=solid,fillcolor=Color2](10,0)(12,1)
\psline[linewidth=0.4pt,linecolor=white](10.2,0.5)(10.8,0.5)
\psline[linewidth=0.4pt,linecolor=white](10.5,0.2)(10.5,0.8)
\psline[linewidth=0.4pt,linecolor=white](11.2,0.5)(11.8,0.5)
\psline[linewidth=0.4pt,linecolor=white](11.5,0.2)(11.5,0.8)
\psframe[linewidth=0.1pt,fillstyle=solid,fillcolor=Color3](13,0)(14,1)
\psline[linewidth=0.4pt,linecolor=white](13.2,0.5)(13.8,0.5)
\psframe[linewidth=0.1pt,fillstyle=solid,fillcolor=Color4](23,0)(25,1)
\psline[linewidth=0.4pt,linecolor=white](23.2,0.5)(23.8,0.5)
\psline[linewidth=0.4pt,linecolor=white](24.2,0.5)(24.8,0.5)
\multips(0,0)(1,0){39}{\psline[linewidth=0.1pt](0,0)(0,1)}
\endpspicture }\\
\hline  0&  3&  1&  0&
{\psset{unit=0.15} \pspicture[shift=*](0,0)(39,0.45)
\psframe[linewidth=0.1pt](0.0,0)(39,1)
\psframe[linewidth=0.1pt,fillstyle=solid,fillcolor=Color1](0,0)(1,1)
\psline[linewidth=0.4pt,linecolor=white](.2,0.5)(.8,0.5)
\psline[linewidth=0.4pt,linecolor=white](.5,0.2)(.5,0.8)
\psframe[linewidth=0.1pt,fillstyle=solid,fillcolor=Color2](12,0)(13,1)
\psline[linewidth=0.4pt,linecolor=white](12.2,0.5)(12.8,0.5)
\psframe[linewidth=0.1pt,fillstyle=solid,fillcolor=Color3](13,0)(14,1)
\psline[linewidth=0.4pt,linecolor=white](13.2,0.5)(13.8,0.5)
\psframe[linewidth=0.1pt,fillstyle=solid,fillcolor=Color4](25,0)(26,1)
\psline[linewidth=0.4pt,linecolor=white](25.2,0.5)(25.8,0.5)
\psline[linewidth=0.4pt,linecolor=white](25.5,0.2)(25.5,0.8)
\multips(0,0)(1,0){39}{\psline[linewidth=0.1pt](0,0)(0,1)}
\endpspicture }\\
  0&  3&  1&  1&
{\psset{unit=0.15} \pspicture[shift=*](0,0)(39,0.45)
\psframe[linewidth=0.1pt](0.0,0)(39,1)
\psframe[linewidth=0.1pt,fillstyle=solid,fillcolor=Color1](0,0)(1,1)
\psline[linewidth=0.4pt,linecolor=white](.2,0.5)(.8,0.5)
\psline[linewidth=0.4pt,linecolor=white](.5,0.2)(.5,0.8)
\psframe[linewidth=0.1pt,fillstyle=solid,fillcolor=Color2](12,0)(13,1)
\psline[linewidth=0.4pt,linecolor=white](12.2,0.5)(12.8,0.5)
\psframe[linewidth=0.1pt,fillstyle=solid,fillcolor=Color3](13,0)(14,1)
\psline[linewidth=0.4pt,linecolor=white](13.2,0.5)(13.8,0.5)
\psframe[linewidth=0.1pt,fillstyle=solid,fillcolor=Color4](25,0)(26,1)
\psline[linewidth=0.4pt,linecolor=white](25.2,0.5)(25.8,0.5)
\psline[linewidth=0.4pt,linecolor=white](25.5,0.2)(25.5,0.8)
\multips(0,0)(1,0){39}{\psline[linewidth=0.1pt](0,0)(0,1)}
\endpspicture }\\
  0&  3&  2&  0&
{\psset{unit=0.15} \pspicture[shift=*](0,0)(39,0.45)
\psframe[linewidth=0.1pt](0.0,0)(39,1)
\psframe[linewidth=0.1pt,fillstyle=solid,fillcolor=Color1](0,0)(1,1)
\psline[linewidth=0.4pt,linecolor=white](.2,0.5)(.8,0.5)
\psline[linewidth=0.4pt,linecolor=white](.5,0.2)(.5,0.8)
\psframe[linewidth=0.1pt,fillstyle=solid,fillcolor=Color2](12,0)(13,1)
\psline[linewidth=0.4pt,linecolor=white](12.2,0.5)(12.8,0.5)
\psframe[linewidth=0.1pt,fillstyle=solid,fillcolor=Color3](13,0)(14,1)
\psline[linewidth=0.4pt,linecolor=white](13.2,0.5)(13.8,0.5)
\psframe[linewidth=0.1pt,fillstyle=solid,fillcolor=Color4](25,0)(26,1)
\psline[linewidth=0.4pt,linecolor=white](25.2,0.5)(25.8,0.5)
\psline[linewidth=0.4pt,linecolor=white](25.5,0.2)(25.5,0.8)
\multips(0,0)(1,0){39}{\psline[linewidth=0.1pt](0,0)(0,1)}
\endpspicture }\\
  0&  3&  2&  1&
{\psset{unit=0.15} \pspicture[shift=*](0,0)(39,0.45)
\psframe[linewidth=0.1pt](0.0,0)(39,1)
\psframe[linewidth=0.1pt,fillstyle=solid,fillcolor=Color1](0,0)(1,1)
\psline[linewidth=0.4pt,linecolor=white](.2,0.5)(.8,0.5)
\psline[linewidth=0.4pt,linecolor=white](.5,0.2)(.5,0.8)
\psframe[linewidth=0.1pt,fillstyle=solid,fillcolor=Color2](12,0)(13,1)
\psline[linewidth=0.4pt,linecolor=white](12.2,0.5)(12.8,0.5)
\psframe[linewidth=0.1pt,fillstyle=solid,fillcolor=Color3](13,0)(14,1)
\psline[linewidth=0.4pt,linecolor=white](13.2,0.5)(13.8,0.5)
\psframe[linewidth=0.1pt,fillstyle=solid,fillcolor=Color4](25,0)(26,1)
\psline[linewidth=0.4pt,linecolor=white](25.2,0.5)(25.8,0.5)
\psline[linewidth=0.4pt,linecolor=white](25.5,0.2)(25.5,0.8)
\multips(0,0)(1,0){39}{\psline[linewidth=0.1pt](0,0)(0,1)}
\endpspicture }\\
\hline  1&  0&  0&  0&
{\psset{unit=0.15} \pspicture[shift=*](0,0)(39,0.45)
\psframe[linewidth=0.1pt](0.0,0)(39,1)
\psframe[linewidth=0.1pt,fillstyle=solid,fillcolor=Color1](0,0)(1,1)
\psline[linewidth=0.4pt,linecolor=white](.2,0.5)(.8,0.5)
\psline[linewidth=0.4pt,linecolor=white](.5,0.2)(.5,0.8)
\psframe[linewidth=0.1pt,fillstyle=solid,fillcolor=Color2](14,0)(15,1)
\psline[linewidth=0.4pt,linecolor=white](14.2,0.5)(14.8,0.5)
\psline[linewidth=0.4pt,linecolor=white](14.5,0.2)(14.5,0.8)
\psframe[linewidth=0.1pt,fillstyle=solid,fillcolor=Color3](26,0)(27,1)
\psline[linewidth=0.4pt,linecolor=white](26.2,0.5)(26.8,0.5)
\psframe[linewidth=0.1pt,fillstyle=solid,fillcolor=Color4](27,0)(28,1)
\psline[linewidth=0.4pt,linecolor=white](27.2,0.5)(27.8,0.5)
\multips(0,0)(1,0){39}{\psline[linewidth=0.1pt](0,0)(0,1)}
\endpspicture }\\
  1&  0&  0&  1&
{\psset{unit=0.15} \pspicture[shift=*](0,0)(39,0.45)
\psframe[linewidth=0.1pt](0.0,0)(39,1)
\psframe[linewidth=0.1pt,fillstyle=solid,fillcolor=Color1](0,0)(1,1)
\psline[linewidth=0.4pt,linecolor=white](.2,0.5)(.8,0.5)
\psline[linewidth=0.4pt,linecolor=white](.5,0.2)(.5,0.8)
\psframe[linewidth=0.1pt,fillstyle=solid,fillcolor=Color2](14,0)(15,1)
\psline[linewidth=0.4pt,linecolor=white](14.2,0.5)(14.8,0.5)
\psline[linewidth=0.4pt,linecolor=white](14.5,0.2)(14.5,0.8)
\psframe[linewidth=0.1pt,fillstyle=solid,fillcolor=Color3](26,0)(27,1)
\psline[linewidth=0.4pt,linecolor=white](26.2,0.5)(26.8,0.5)
\psframe[linewidth=0.1pt,fillstyle=solid,fillcolor=Color4](27,0)(28,1)
\psline[linewidth=0.4pt,linecolor=white](27.2,0.5)(27.8,0.5)
\multips(0,0)(1,0){39}{\psline[linewidth=0.1pt](0,0)(0,1)}
\endpspicture }\\
  1&  0&  1&  0&
{\psset{unit=0.15} \pspicture[shift=*](0,0)(39,0.45)
\psframe[linewidth=0.1pt](0.0,0)(39,1)
\psframe[linewidth=0.1pt,fillstyle=solid,fillcolor=Color1](0,0)(1,1)
\psline[linewidth=0.4pt,linecolor=white](.2,0.5)(.8,0.5)
\psline[linewidth=0.4pt,linecolor=white](.5,0.2)(.5,0.8)
\psframe[linewidth=0.1pt,fillstyle=solid,fillcolor=Color2](14,0)(15,1)
\psline[linewidth=0.4pt,linecolor=white](14.2,0.5)(14.8,0.5)
\psline[linewidth=0.4pt,linecolor=white](14.5,0.2)(14.5,0.8)
\psframe[linewidth=0.1pt,fillstyle=solid,fillcolor=Color3](26,0)(27,1)
\psline[linewidth=0.4pt,linecolor=white](26.2,0.5)(26.8,0.5)
\psframe[linewidth=0.1pt,fillstyle=solid,fillcolor=Color4](27,0)(28,1)
\psline[linewidth=0.4pt,linecolor=white](27.2,0.5)(27.8,0.5)
\multips(0,0)(1,0){39}{\psline[linewidth=0.1pt](0,0)(0,1)}
\endpspicture }\\
  1&  0&  1&  1&
{\psset{unit=0.15} \pspicture[shift=*](0,0)(39,0.45)
\psframe[linewidth=0.1pt](0.0,0)(39,1)
\psframe[linewidth=0.1pt,fillstyle=solid,fillcolor=Color1](0,0)(1,1)
\psline[linewidth=0.4pt,linecolor=white](.2,0.5)(.8,0.5)
\psline[linewidth=0.4pt,linecolor=white](.5,0.2)(.5,0.8)
\psframe[linewidth=0.1pt,fillstyle=solid,fillcolor=Color2](14,0)(15,1)
\psline[linewidth=0.4pt,linecolor=white](14.2,0.5)(14.8,0.5)
\psline[linewidth=0.4pt,linecolor=white](14.5,0.2)(14.5,0.8)
\psframe[linewidth=0.1pt,fillstyle=solid,fillcolor=Color3](26,0)(27,1)
\psline[linewidth=0.4pt,linecolor=white](26.2,0.5)(26.8,0.5)
\psframe[linewidth=0.1pt,fillstyle=solid,fillcolor=Color4](27,0)(28,1)
\psline[linewidth=0.4pt,linecolor=white](27.2,0.5)(27.8,0.5)
\multips(0,0)(1,0){39}{\psline[linewidth=0.1pt](0,0)(0,1)}
\endpspicture }\\
\hline  1&  0&  2&  0&
{\psset{unit=0.15} \pspicture[shift=*](0,0)(39,0.45)
\psframe[linewidth=0.1pt](0.0,0)(39,1)
\psframe[linewidth=0.1pt,fillstyle=solid,fillcolor=Color1](0,0)(1,1)
\psline[linewidth=0.4pt,linecolor=white](.2,0.5)(.8,0.5)
\psline[linewidth=0.4pt,linecolor=white](.5,0.2)(.5,0.8)
\psframe[linewidth=0.1pt,fillstyle=solid,fillcolor=Color2](15,0)(17,1)
\psline[linewidth=0.4pt,linecolor=white](15.2,0.5)(15.8,0.5)
\psline[linewidth=0.4pt,linecolor=white](16.2,0.5)(16.8,0.5)
\psframe[linewidth=0.1pt,fillstyle=solid,fillcolor=Color3](26,0)(27,1)
\psline[linewidth=0.4pt,linecolor=white](26.2,0.5)(26.8,0.5)
\psframe[linewidth=0.1pt,fillstyle=solid,fillcolor=Color4](28,0)(30,1)
\psline[linewidth=0.4pt,linecolor=white](28.2,0.5)(28.8,0.5)
\psline[linewidth=0.4pt,linecolor=white](28.5,0.2)(28.5,0.8)
\psline[linewidth=0.4pt,linecolor=white](29.2,0.5)(29.8,0.5)
\psline[linewidth=0.4pt,linecolor=white](29.5,0.2)(29.5,0.8)
\multips(0,0)(1,0){39}{\psline[linewidth=0.1pt](0,0)(0,1)}
\endpspicture }\\
  1&  0&  2&  1&
{\psset{unit=0.15} \pspicture[shift=*](0,0)(39,0.45)
\psframe[linewidth=0.1pt](0.0,0)(39,1)
\psframe[linewidth=0.1pt,fillstyle=solid,fillcolor=Color1](0,0)(1,1)
\psline[linewidth=0.4pt,linecolor=white](.2,0.5)(.8,0.5)
\psline[linewidth=0.4pt,linecolor=white](.5,0.2)(.5,0.8)
\psframe[linewidth=0.1pt,fillstyle=solid,fillcolor=Color2](15,0)(17,1)
\psline[linewidth=0.4pt,linecolor=white](15.2,0.5)(15.8,0.5)
\psline[linewidth=0.4pt,linecolor=white](16.2,0.5)(16.8,0.5)
\psframe[linewidth=0.1pt,fillstyle=solid,fillcolor=Color3](26,0)(27,1)
\psline[linewidth=0.4pt,linecolor=white](26.2,0.5)(26.8,0.5)
\psframe[linewidth=0.1pt,fillstyle=solid,fillcolor=Color4](28,0)(30,1)
\psline[linewidth=0.4pt,linecolor=white](28.2,0.5)(28.8,0.5)
\psline[linewidth=0.4pt,linecolor=white](28.5,0.2)(28.5,0.8)
\psline[linewidth=0.4pt,linecolor=white](29.2,0.5)(29.8,0.5)
\psline[linewidth=0.4pt,linecolor=white](29.5,0.2)(29.5,0.8)
\multips(0,0)(1,0){39}{\psline[linewidth=0.1pt](0,0)(0,1)}
\endpspicture }\\
\hline  1&  1&  0&  0&
{\psset{unit=0.15} \pspicture[shift=*](0,0)(39,0.45)
\psframe[linewidth=0.1pt](0.0,0)(39,1)
\psframe[linewidth=0.1pt,fillstyle=solid,fillcolor=Color1](0,0)(1,1)
\psline[linewidth=0.4pt,linecolor=white](.2,0.5)(.8,0.5)
\psline[linewidth=0.4pt,linecolor=white](.5,0.2)(.5,0.8)
\psframe[linewidth=0.1pt,fillstyle=solid,fillcolor=Color2](17,0)(18,1)
\psline[linewidth=0.4pt,linecolor=white](17.2,0.5)(17.8,0.5)
\psline[linewidth=0.4pt,linecolor=white](17.5,0.2)(17.5,0.8)
\psframe[linewidth=0.1pt,fillstyle=solid,fillcolor=Color3](26,0)(27,1)
\psline[linewidth=0.4pt,linecolor=white](26.2,0.5)(26.8,0.5)
\psframe[linewidth=0.1pt,fillstyle=solid,fillcolor=Color4](30,0)(31,1)
\psline[linewidth=0.4pt,linecolor=white](30.2,0.5)(30.8,0.5)
\multips(0,0)(1,0){39}{\psline[linewidth=0.1pt](0,0)(0,1)}
\endpspicture }\\
  1&  1&  0&  1&
{\psset{unit=0.15} \pspicture[shift=*](0,0)(39,0.45)
\psframe[linewidth=0.1pt](0.0,0)(39,1)
\psframe[linewidth=0.1pt,fillstyle=solid,fillcolor=Color1](0,0)(1,1)
\psline[linewidth=0.4pt,linecolor=white](.2,0.5)(.8,0.5)
\psline[linewidth=0.4pt,linecolor=white](.5,0.2)(.5,0.8)
\psframe[linewidth=0.1pt,fillstyle=solid,fillcolor=Color2](17,0)(18,1)
\psline[linewidth=0.4pt,linecolor=white](17.2,0.5)(17.8,0.5)
\psline[linewidth=0.4pt,linecolor=white](17.5,0.2)(17.5,0.8)
\psframe[linewidth=0.1pt,fillstyle=solid,fillcolor=Color3](26,0)(27,1)
\psline[linewidth=0.4pt,linecolor=white](26.2,0.5)(26.8,0.5)
\psframe[linewidth=0.1pt,fillstyle=solid,fillcolor=Color4](30,0)(31,1)
\psline[linewidth=0.4pt,linecolor=white](30.2,0.5)(30.8,0.5)
\multips(0,0)(1,0){39}{\psline[linewidth=0.1pt](0,0)(0,1)}
\endpspicture }\\
  1&  1&  1&  0&
{\psset{unit=0.15} \pspicture[shift=*](0,0)(39,0.45)
\psframe[linewidth=0.1pt](0.0,0)(39,1)
\psframe[linewidth=0.1pt,fillstyle=solid,fillcolor=Color1](0,0)(1,1)
\psline[linewidth=0.4pt,linecolor=white](.2,0.5)(.8,0.5)
\psline[linewidth=0.4pt,linecolor=white](.5,0.2)(.5,0.8)
\psframe[linewidth=0.1pt,fillstyle=solid,fillcolor=Color2](17,0)(18,1)
\psline[linewidth=0.4pt,linecolor=white](17.2,0.5)(17.8,0.5)
\psline[linewidth=0.4pt,linecolor=white](17.5,0.2)(17.5,0.8)
\psframe[linewidth=0.1pt,fillstyle=solid,fillcolor=Color3](26,0)(27,1)
\psline[linewidth=0.4pt,linecolor=white](26.2,0.5)(26.8,0.5)
\psframe[linewidth=0.1pt,fillstyle=solid,fillcolor=Color4](30,0)(31,1)
\psline[linewidth=0.4pt,linecolor=white](30.2,0.5)(30.8,0.5)
\multips(0,0)(1,0){39}{\psline[linewidth=0.1pt](0,0)(0,1)}
\endpspicture }\\
  1&  1&  1&  1&
{\psset{unit=0.15} \pspicture[shift=*](0,0)(39,0.45)
\psframe[linewidth=0.1pt](0.0,0)(39,1)
\psframe[linewidth=0.1pt,fillstyle=solid,fillcolor=Color1](0,0)(1,1)
\psline[linewidth=0.4pt,linecolor=white](.2,0.5)(.8,0.5)
\psline[linewidth=0.4pt,linecolor=white](.5,0.2)(.5,0.8)
\psframe[linewidth=0.1pt,fillstyle=solid,fillcolor=Color2](17,0)(18,1)
\psline[linewidth=0.4pt,linecolor=white](17.2,0.5)(17.8,0.5)
\psline[linewidth=0.4pt,linecolor=white](17.5,0.2)(17.5,0.8)
\psframe[linewidth=0.1pt,fillstyle=solid,fillcolor=Color3](26,0)(27,1)
\psline[linewidth=0.4pt,linecolor=white](26.2,0.5)(26.8,0.5)
\psframe[linewidth=0.1pt,fillstyle=solid,fillcolor=Color4](30,0)(31,1)
\psline[linewidth=0.4pt,linecolor=white](30.2,0.5)(30.8,0.5)
\multips(0,0)(1,0){39}{\psline[linewidth=0.1pt](0,0)(0,1)}
\endpspicture }\\
\hline  1&  1&  2&  0&
{\psset{unit=0.15} \pspicture[shift=*](0,0)(39,0.45)
\psframe[linewidth=0.1pt](0.0,0)(39,1)
\psframe[linewidth=0.1pt,fillstyle=solid,fillcolor=Color1](0,0)(1,1)
\psline[linewidth=0.4pt,linecolor=white](.2,0.5)(.8,0.5)
\psline[linewidth=0.4pt,linecolor=white](.5,0.2)(.5,0.8)
\psframe[linewidth=0.1pt,fillstyle=solid,fillcolor=Color2](18,0)(20,1)
\psline[linewidth=0.4pt,linecolor=white](18.2,0.5)(18.8,0.5)
\psline[linewidth=0.4pt,linecolor=white](19.2,0.5)(19.8,0.5)
\psframe[linewidth=0.1pt,fillstyle=solid,fillcolor=Color3](26,0)(27,1)
\psline[linewidth=0.4pt,linecolor=white](26.2,0.5)(26.8,0.5)
\psframe[linewidth=0.1pt,fillstyle=solid,fillcolor=Color4](31,0)(33,1)
\psline[linewidth=0.4pt,linecolor=white](31.2,0.5)(31.8,0.5)
\psline[linewidth=0.4pt,linecolor=white](31.5,0.2)(31.5,0.8)
\psline[linewidth=0.4pt,linecolor=white](32.2,0.5)(32.8,0.5)
\psline[linewidth=0.4pt,linecolor=white](32.5,0.2)(32.5,0.8)
\multips(0,0)(1,0){39}{\psline[linewidth=0.1pt](0,0)(0,1)}
\endpspicture }\\
  1&  1&  2&  1&
{\psset{unit=0.15} \pspicture[shift=*](0,0)(39,0.45)
\psframe[linewidth=0.1pt](0.0,0)(39,1)
\psframe[linewidth=0.1pt,fillstyle=solid,fillcolor=Color1](0,0)(1,1)
\psline[linewidth=0.4pt,linecolor=white](.2,0.5)(.8,0.5)
\psline[linewidth=0.4pt,linecolor=white](.5,0.2)(.5,0.8)
\psframe[linewidth=0.1pt,fillstyle=solid,fillcolor=Color2](18,0)(20,1)
\psline[linewidth=0.4pt,linecolor=white](18.2,0.5)(18.8,0.5)
\psline[linewidth=0.4pt,linecolor=white](19.2,0.5)(19.8,0.5)
\psframe[linewidth=0.1pt,fillstyle=solid,fillcolor=Color3](26,0)(27,1)
\psline[linewidth=0.4pt,linecolor=white](26.2,0.5)(26.8,0.5)
\psframe[linewidth=0.1pt,fillstyle=solid,fillcolor=Color4](31,0)(33,1)
\psline[linewidth=0.4pt,linecolor=white](31.2,0.5)(31.8,0.5)
\psline[linewidth=0.4pt,linecolor=white](31.5,0.2)(31.5,0.8)
\psline[linewidth=0.4pt,linecolor=white](32.2,0.5)(32.8,0.5)
\psline[linewidth=0.4pt,linecolor=white](32.5,0.2)(32.5,0.8)
\multips(0,0)(1,0){39}{\psline[linewidth=0.1pt](0,0)(0,1)}
\endpspicture }\\
\hline  1&  2&  0&  0&
{\psset{unit=0.15} \pspicture[shift=*](0,0)(39,0.45)
\psframe[linewidth=0.1pt](0.0,0)(39,1)
\psframe[linewidth=0.1pt,fillstyle=solid,fillcolor=Color1](0,0)(1,1)
\psline[linewidth=0.4pt,linecolor=white](.2,0.5)(.8,0.5)
\psline[linewidth=0.4pt,linecolor=white](.5,0.2)(.5,0.8)
\psframe[linewidth=0.1pt,fillstyle=solid,fillcolor=Color2](20,0)(21,1)
\psline[linewidth=0.4pt,linecolor=white](20.2,0.5)(20.8,0.5)
\psline[linewidth=0.4pt,linecolor=white](20.5,0.2)(20.5,0.8)
\psframe[linewidth=0.1pt,fillstyle=solid,fillcolor=Color3](26,0)(27,1)
\psline[linewidth=0.4pt,linecolor=white](26.2,0.5)(26.8,0.5)
\psframe[linewidth=0.1pt,fillstyle=solid,fillcolor=Color4](33,0)(34,1)
\psline[linewidth=0.4pt,linecolor=white](33.2,0.5)(33.8,0.5)
\multips(0,0)(1,0){39}{\psline[linewidth=0.1pt](0,0)(0,1)}
\endpspicture }\\
  1&  2&  0&  1&
{\psset{unit=0.15} \pspicture[shift=*](0,0)(39,0.45)
\psframe[linewidth=0.1pt](0.0,0)(39,1)
\psframe[linewidth=0.1pt,fillstyle=solid,fillcolor=Color1](0,0)(1,1)
\psline[linewidth=0.4pt,linecolor=white](.2,0.5)(.8,0.5)
\psline[linewidth=0.4pt,linecolor=white](.5,0.2)(.5,0.8)
\psframe[linewidth=0.1pt,fillstyle=solid,fillcolor=Color2](20,0)(21,1)
\psline[linewidth=0.4pt,linecolor=white](20.2,0.5)(20.8,0.5)
\psline[linewidth=0.4pt,linecolor=white](20.5,0.2)(20.5,0.8)
\psframe[linewidth=0.1pt,fillstyle=solid,fillcolor=Color3](26,0)(27,1)
\psline[linewidth=0.4pt,linecolor=white](26.2,0.5)(26.8,0.5)
\psframe[linewidth=0.1pt,fillstyle=solid,fillcolor=Color4](33,0)(34,1)
\psline[linewidth=0.4pt,linecolor=white](33.2,0.5)(33.8,0.5)
\multips(0,0)(1,0){39}{\psline[linewidth=0.1pt](0,0)(0,1)}
\endpspicture }\\
  1&  2&  1&  0&
{\psset{unit=0.15} \pspicture[shift=*](0,0)(39,0.45)
\psframe[linewidth=0.1pt](0.0,0)(39,1)
\psframe[linewidth=0.1pt,fillstyle=solid,fillcolor=Color1](0,0)(1,1)
\psline[linewidth=0.4pt,linecolor=white](.2,0.5)(.8,0.5)
\psline[linewidth=0.4pt,linecolor=white](.5,0.2)(.5,0.8)
\psframe[linewidth=0.1pt,fillstyle=solid,fillcolor=Color2](20,0)(21,1)
\psline[linewidth=0.4pt,linecolor=white](20.2,0.5)(20.8,0.5)
\psline[linewidth=0.4pt,linecolor=white](20.5,0.2)(20.5,0.8)
\psframe[linewidth=0.1pt,fillstyle=solid,fillcolor=Color3](26,0)(27,1)
\psline[linewidth=0.4pt,linecolor=white](26.2,0.5)(26.8,0.5)
\psframe[linewidth=0.1pt,fillstyle=solid,fillcolor=Color4](33,0)(34,1)
\psline[linewidth=0.4pt,linecolor=white](33.2,0.5)(33.8,0.5)
\multips(0,0)(1,0){39}{\psline[linewidth=0.1pt](0,0)(0,1)}
\endpspicture }\\
  1&  2&  1&  1&
{\psset{unit=0.15} \pspicture[shift=*](0,0)(39,0.45)
\psframe[linewidth=0.1pt](0.0,0)(39,1)
\psframe[linewidth=0.1pt,fillstyle=solid,fillcolor=Color1](0,0)(1,1)
\psline[linewidth=0.4pt,linecolor=white](.2,0.5)(.8,0.5)
\psline[linewidth=0.4pt,linecolor=white](.5,0.2)(.5,0.8)
\psframe[linewidth=0.1pt,fillstyle=solid,fillcolor=Color2](20,0)(21,1)
\psline[linewidth=0.4pt,linecolor=white](20.2,0.5)(20.8,0.5)
\psline[linewidth=0.4pt,linecolor=white](20.5,0.2)(20.5,0.8)
\psframe[linewidth=0.1pt,fillstyle=solid,fillcolor=Color3](26,0)(27,1)
\psline[linewidth=0.4pt,linecolor=white](26.2,0.5)(26.8,0.5)
\psframe[linewidth=0.1pt,fillstyle=solid,fillcolor=Color4](33,0)(34,1)
\psline[linewidth=0.4pt,linecolor=white](33.2,0.5)(33.8,0.5)
\multips(0,0)(1,0){39}{\psline[linewidth=0.1pt](0,0)(0,1)}
\endpspicture }\\
\hline  1&  2&  2&  0&
{\psset{unit=0.15} \pspicture[shift=*](0,0)(39,0.45)
\psframe[linewidth=0.1pt](0.0,0)(39,1)
\psframe[linewidth=0.1pt,fillstyle=solid,fillcolor=Color1](0,0)(1,1)
\psline[linewidth=0.4pt,linecolor=white](.2,0.5)(.8,0.5)
\psline[linewidth=0.4pt,linecolor=white](.5,0.2)(.5,0.8)
\psframe[linewidth=0.1pt,fillstyle=solid,fillcolor=Color2](21,0)(23,1)
\psline[linewidth=0.4pt,linecolor=white](21.2,0.5)(21.8,0.5)
\psline[linewidth=0.4pt,linecolor=white](22.2,0.5)(22.8,0.5)
\psframe[linewidth=0.1pt,fillstyle=solid,fillcolor=Color3](26,0)(27,1)
\psline[linewidth=0.4pt,linecolor=white](26.2,0.5)(26.8,0.5)
\psframe[linewidth=0.1pt,fillstyle=solid,fillcolor=Color4](34,0)(36,1)
\psline[linewidth=0.4pt,linecolor=white](34.2,0.5)(34.8,0.5)
\psline[linewidth=0.4pt,linecolor=white](34.5,0.2)(34.5,0.8)
\psline[linewidth=0.4pt,linecolor=white](35.2,0.5)(35.8,0.5)
\psline[linewidth=0.4pt,linecolor=white](35.5,0.2)(35.5,0.8)
\multips(0,0)(1,0){39}{\psline[linewidth=0.1pt](0,0)(0,1)}
\endpspicture }\\
  1&  2&  2&  1&
{\psset{unit=0.15} \pspicture[shift=*](0,0)(39,0.45)
\psframe[linewidth=0.1pt](0.0,0)(39,1)
\psframe[linewidth=0.1pt,fillstyle=solid,fillcolor=Color1](0,0)(1,1)
\psline[linewidth=0.4pt,linecolor=white](.2,0.5)(.8,0.5)
\psline[linewidth=0.4pt,linecolor=white](.5,0.2)(.5,0.8)
\psframe[linewidth=0.1pt,fillstyle=solid,fillcolor=Color2](21,0)(23,1)
\psline[linewidth=0.4pt,linecolor=white](21.2,0.5)(21.8,0.5)
\psline[linewidth=0.4pt,linecolor=white](22.2,0.5)(22.8,0.5)
\psframe[linewidth=0.1pt,fillstyle=solid,fillcolor=Color3](26,0)(27,1)
\psline[linewidth=0.4pt,linecolor=white](26.2,0.5)(26.8,0.5)
\psframe[linewidth=0.1pt,fillstyle=solid,fillcolor=Color4](34,0)(36,1)
\psline[linewidth=0.4pt,linecolor=white](34.2,0.5)(34.8,0.5)
\psline[linewidth=0.4pt,linecolor=white](34.5,0.2)(34.5,0.8)
\psline[linewidth=0.4pt,linecolor=white](35.2,0.5)(35.8,0.5)
\psline[linewidth=0.4pt,linecolor=white](35.5,0.2)(35.5,0.8)
\multips(0,0)(1,0){39}{\psline[linewidth=0.1pt](0,0)(0,1)}
\endpspicture }\\
\hline  1&  3&  0&  0&
{\psset{unit=0.15} \pspicture[shift=*](0,0)(39,0.45)
\psframe[linewidth=0.1pt](0.0,0)(39,1)
\psframe[linewidth=0.1pt,fillstyle=solid,fillcolor=Color1](0,0)(1,1)
\psline[linewidth=0.4pt,linecolor=white](.2,0.5)(.8,0.5)
\psline[linewidth=0.4pt,linecolor=white](.5,0.2)(.5,0.8)
\psframe[linewidth=0.1pt,fillstyle=solid,fillcolor=Color2](23,0)(24,1)
\psline[linewidth=0.4pt,linecolor=white](23.2,0.5)(23.8,0.5)
\psline[linewidth=0.4pt,linecolor=white](23.5,0.2)(23.5,0.8)
\psframe[linewidth=0.1pt,fillstyle=solid,fillcolor=Color3](26,0)(27,1)
\psline[linewidth=0.4pt,linecolor=white](26.2,0.5)(26.8,0.5)
\psframe[linewidth=0.1pt,fillstyle=solid,fillcolor=Color4](36,0)(37,1)
\psline[linewidth=0.4pt,linecolor=white](36.2,0.5)(36.8,0.5)
\multips(0,0)(1,0){39}{\psline[linewidth=0.1pt](0,0)(0,1)}
\endpspicture }\\
  1&  3&  0&  1&
{\psset{unit=0.15} \pspicture[shift=*](0,0)(39,0.45)
\psframe[linewidth=0.1pt](0.0,0)(39,1)
\psframe[linewidth=0.1pt,fillstyle=solid,fillcolor=Color1](0,0)(1,1)
\psline[linewidth=0.4pt,linecolor=white](.2,0.5)(.8,0.5)
\psline[linewidth=0.4pt,linecolor=white](.5,0.2)(.5,0.8)
\psframe[linewidth=0.1pt,fillstyle=solid,fillcolor=Color2](23,0)(24,1)
\psline[linewidth=0.4pt,linecolor=white](23.2,0.5)(23.8,0.5)
\psline[linewidth=0.4pt,linecolor=white](23.5,0.2)(23.5,0.8)
\psframe[linewidth=0.1pt,fillstyle=solid,fillcolor=Color3](26,0)(27,1)
\psline[linewidth=0.4pt,linecolor=white](26.2,0.5)(26.8,0.5)
\psframe[linewidth=0.1pt,fillstyle=solid,fillcolor=Color4](36,0)(37,1)
\psline[linewidth=0.4pt,linecolor=white](36.2,0.5)(36.8,0.5)
\multips(0,0)(1,0){39}{\psline[linewidth=0.1pt](0,0)(0,1)}
\endpspicture }\\
  1&  3&  1&  0&
{\psset{unit=0.15} \pspicture[shift=*](0,0)(39,0.45)
\psframe[linewidth=0.1pt](0.0,0)(39,1)
\psframe[linewidth=0.1pt,fillstyle=solid,fillcolor=Color1](0,0)(1,1)
\psline[linewidth=0.4pt,linecolor=white](.2,0.5)(.8,0.5)
\psline[linewidth=0.4pt,linecolor=white](.5,0.2)(.5,0.8)
\psframe[linewidth=0.1pt,fillstyle=solid,fillcolor=Color2](23,0)(24,1)
\psline[linewidth=0.4pt,linecolor=white](23.2,0.5)(23.8,0.5)
\psline[linewidth=0.4pt,linecolor=white](23.5,0.2)(23.5,0.8)
\psframe[linewidth=0.1pt,fillstyle=solid,fillcolor=Color3](26,0)(27,1)
\psline[linewidth=0.4pt,linecolor=white](26.2,0.5)(26.8,0.5)
\psframe[linewidth=0.1pt,fillstyle=solid,fillcolor=Color4](36,0)(37,1)
\psline[linewidth=0.4pt,linecolor=white](36.2,0.5)(36.8,0.5)
\multips(0,0)(1,0){39}{\psline[linewidth=0.1pt](0,0)(0,1)}
\endpspicture }\\
  1&  3&  1&  1&
{\psset{unit=0.15} \pspicture[shift=*](0,0)(39,0.45)
\psframe[linewidth=0.1pt](0.0,0)(39,1)
\psframe[linewidth=0.1pt,fillstyle=solid,fillcolor=Color1](0,0)(1,1)
\psline[linewidth=0.4pt,linecolor=white](.2,0.5)(.8,0.5)
\psline[linewidth=0.4pt,linecolor=white](.5,0.2)(.5,0.8)
\psframe[linewidth=0.1pt,fillstyle=solid,fillcolor=Color2](23,0)(24,1)
\psline[linewidth=0.4pt,linecolor=white](23.2,0.5)(23.8,0.5)
\psline[linewidth=0.4pt,linecolor=white](23.5,0.2)(23.5,0.8)
\psframe[linewidth=0.1pt,fillstyle=solid,fillcolor=Color3](26,0)(27,1)
\psline[linewidth=0.4pt,linecolor=white](26.2,0.5)(26.8,0.5)
\psframe[linewidth=0.1pt,fillstyle=solid,fillcolor=Color4](36,0)(37,1)
\psline[linewidth=0.4pt,linecolor=white](36.2,0.5)(36.8,0.5)
\multips(0,0)(1,0){39}{\psline[linewidth=0.1pt](0,0)(0,1)}
\endpspicture }\\
\hline  1&  3&  2&  0&
{\psset{unit=0.15} \pspicture[shift=*](0,0)(39,0.45)
\psframe[linewidth=0.1pt](0.0,0)(39,1)
\psframe[linewidth=0.1pt,fillstyle=solid,fillcolor=Color1](0,0)(1,1)
\psline[linewidth=0.4pt,linecolor=white](.2,0.5)(.8,0.5)
\psline[linewidth=0.4pt,linecolor=white](.5,0.2)(.5,0.8)
\psframe[linewidth=0.1pt,fillstyle=solid,fillcolor=Color2](24,0)(26,1)
\psline[linewidth=0.4pt,linecolor=white](24.2,0.5)(24.8,0.5)
\psline[linewidth=0.4pt,linecolor=white](25.2,0.5)(25.8,0.5)
\psframe[linewidth=0.1pt,fillstyle=solid,fillcolor=Color3](26,0)(27,1)
\psline[linewidth=0.4pt,linecolor=white](26.2,0.5)(26.8,0.5)
\psframe[linewidth=0.1pt,fillstyle=solid,fillcolor=Color4](37,0)(39,1)
\psline[linewidth=0.4pt,linecolor=white](37.2,0.5)(37.8,0.5)
\psline[linewidth=0.4pt,linecolor=white](37.5,0.2)(37.5,0.8)
\psline[linewidth=0.4pt,linecolor=white](38.2,0.5)(38.8,0.5)
\psline[linewidth=0.4pt,linecolor=white](38.5,0.2)(38.5,0.8)
\multips(0,0)(1,0){39}{\psline[linewidth=0.1pt](0,0)(0,1)}
\endpspicture }\\
  1&  3&  2&  1&
{\psset{unit=0.15} \pspicture[shift=*](0,0)(39,0.45)
\psframe[linewidth=0.1pt](0.0,0)(39,1)
\psframe[linewidth=0.1pt,fillstyle=solid,fillcolor=Color1](0,0)(1,1)
\psline[linewidth=0.4pt,linecolor=white](.2,0.5)(.8,0.5)
\psline[linewidth=0.4pt,linecolor=white](.5,0.2)(.5,0.8)
\psframe[linewidth=0.1pt,fillstyle=solid,fillcolor=Color2](24,0)(26,1)
\psline[linewidth=0.4pt,linecolor=white](24.2,0.5)(24.8,0.5)
\psline[linewidth=0.4pt,linecolor=white](25.2,0.5)(25.8,0.5)
\psframe[linewidth=0.1pt,fillstyle=solid,fillcolor=Color3](26,0)(27,1)
\psline[linewidth=0.4pt,linecolor=white](26.2,0.5)(26.8,0.5)
\psframe[linewidth=0.1pt,fillstyle=solid,fillcolor=Color4](37,0)(39,1)
\psline[linewidth=0.4pt,linecolor=white](37.2,0.5)(37.8,0.5)
\psline[linewidth=0.4pt,linecolor=white](37.5,0.2)(37.5,0.8)
\psline[linewidth=0.4pt,linecolor=white](38.2,0.5)(38.8,0.5)
\psline[linewidth=0.4pt,linecolor=white](38.5,0.2)(38.5,0.8)
\multips(0,0)(1,0){39}{\psline[linewidth=0.1pt](0,0)(0,1)}
\endpspicture }\\
\hline
\end{array}
\]
}

\noindent Each small box under the $f_{i}$ column stands for a term in $f_{i}%
$. The terms are ordered in the ascending order in their exponents. Thus the
first box stands for $x^{0}$ and the last box stands for $x^{p_{1}p_{2}-1}$.
Recall that $\Phi_{3\cdot13\cdot79}$ is flat, that is, the coefficients are
$-1,0,1$. An empty box stands for $0.$ Non-empty boxes have signs written on
them. The color of a box indicates which sub-polynomial of $f_{i}$ it belongs
to, as follows.
\[
f_{i}~~=~~~\underset{\color{red}S_{1}}{\underbrace{1}}%
~~~\underset{\color{green}S_{2}}{\underbrace{+g_{i}}}%
~~~\underset{\color{magenta}S_{3}}{\underbrace{-x^{\left(  i_{1}+1\right)
p_{2}}}}~~~\underset{\color{cyan}S_{4}}{\underbrace{-x^{p_{2}}\ g_{i}}}%
\]
\noindent for instance, a red box
\textcolor{red}{\rule{\fontcharht\font`X}{\fontcharht\font`X}} belongs to the
sub-polynomial $\textcolor{red}{S_1}$.

\item[\textsf{C2.}] Note that there is no overlapping of terms (no
cancellation or accumulation) and the terms are ordered in the ascending order
in their exponents, verifying Theorem \ref{thm:exp}-\textsf{C2}.
\end{enumerate}
\end{example}

\begin{remark}
A few observations.

\begin{enumerate}
\item Many $f_{i}$ are the same. For instance, in the above example, we
observe that%
\begin{align*}
f_{0000}  &  =f_{0001}\\
f_{0010}  &  =f_{0011}=f_{0020}=f_{0021}\\
f_{0100}  &  =f_{0101}\\
f_{0110}  &  =f_{0111}=f_{0120}=f_{0121}\\
&  ...
\end{align*}
We put horizontal lines to group the same ones. This pattern holds in general:%
\begin{align*}
\,\text{if}\ i_{3},i_{3}^{\ast}  &  \leq i_{1}\ \ \text{then}\ \ f_{i_{1}%
i_{2}i_{3}i_{4}}=f_{i_{1}i_{2}i_{3}^{\ast}i_{4}^{\ast}}\\
\,\text{if}\ i_{3},i_{3}^{\ast}  &  >i_{1}\ \ \text{then}\ \ f_{i_{1}%
i_{2}i_{3}i_{4}}=f_{i_{1}i_{2}i_{3}^{\ast}i_{4}^{\ast}}%
\end{align*}
It is immediate from the following two facts

\begin{enumerate}
\item The index $i_{4}$ does not appear at all in the explicit expression for
$f_{i}$.

\item The index $i_{3}$ only appears in the case selections: $i_{3}\leq
i_{1}\ $and $i_{3}>i_{1}$.
\end{enumerate}

\item By examining the ranges of the indices $i_{1}$ and $i_{2}$, one sees
immediately that there are
\[
\,2\left(  p_{1}-1\right)  q_{2}=2\left(  p_{1}-1\right)  \frac{\left(
p_{2}-1\right)  }{p_{1}}=2\frac{\varphi\left(  p_{1}p_{2}\right)  }{p_{1}}%
\]
distinct $f_{i}$'s in general. For instance, the above example has
$2\cdot\frac{\varphi(3\cdot13)}{3} = 2\cdot\frac{(3-1)(13-1)}{3} =16$ distinct
$f_{i}$'s.
\end{enumerate}
\end{remark}

\noindent As an application of the explicit expressions in
Theorems~\ref{thm:exp} we give an explicit formula for the number of nonzero
terms (hamming weight, $\mathrm{hw}$) in the cyclotomic polynomials in the family.

\begin{corollary}
\label{cor:hw} Let $p_{2}\equiv1\operatorname{mod}p_{1}$ and $p_{3}%
\equiv1\operatorname{mod}p_{1}p_{2}$. Then%
\[
\mathrm{hw}(\Phi_{p_{1}p_{2}p_{3}})=\frac{2}{3}\frac{\varphi\left(  p_{1}%
p_{2}p_{3}\right)  (p_{1}+4)}{p_{1}p_{2}}+1
\]
where $\varphi\left(  p_{1}p_{2}p_{3}\right)  =\deg\left(  \Phi_{p_{1}%
p_{2}p_{3}}\right)  =\left(  p_{1}-1\right)  \left(  p_{2}-1\right)  \left(
p_{3}-1\right)  $.
\end{corollary}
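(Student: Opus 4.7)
The plan is to compute $\mathrm{hw}(\Phi_{p_1p_2p_3})$ directly from the explicit expression in Theorem \ref{thm:exp}, exploiting the no-overlap property in C2, which reduces the Hamming-weight computation to a straightforward counting argument followed by a polynomial sum in $i_1$.

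First I would invoke C2 of Theorem \ref{thm:exp}: since none of the terms in the displayed expression for $\Phi_{p_1p_2p_3}$ overlap, the Hamming weight is simply
\[
\mathrm{hw}(\Phi_{p_1p_2p_3}) \;=\; 1 \;+\; \sum_{i\in I} \mathrm{hw}(f_i),
\]
where the $+1$ accounts for the monomial $x^{\varphi(p_1p_2p_3)}$, and multiplication by $x^{i\cdot\rho}$ does not change $\mathrm{hw}$. Here I use that no overlap globally forces no overlap within any single $f_i\,x^{i\cdot\rho}$ either, so $\mathrm{hw}(f_i)$ equals the number of monomials written in its defining formula.

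Next I would count $\mathrm{hw}(f_i)$ by cases. From $f_i = 1 + g_i - x^{(i_1+1)p_2} - x^{p_2}g_i$, the sub-polynomial $g_i$ has $p_1 - 1 - i_1$ terms when $i_3\le i_1$ and $i_1+1$ terms when $i_3>i_1$. So
\[
\mathrm{hw}(f_i) \;=\; \begin{cases} 2(p_1-i_1) & \text{if } i_3\le i_1,\\ 2(i_1+2) & \text{if } i_3>i_1. \end{cases}
\]
Summing over $i_2\in\{0,\ldots,q_2-1\}$ and $i_4\in\{0,\ldots,q_3-1\}$ gives a common factor $q_2q_3$, and for each fixed $i_1$ the index $i_3\in\{0,\ldots,p_1-1\}$ contributes $i_1+1$ values with $i_3\le i_1$ and $p_1-1-i_1$ values with $i_3>i_1$. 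Thus
\[
\sum_{i\in I}\mathrm{hw}(f_i) \;=\; q_2q_3 \sum_{i_1=0}^{p_1-2}\Bigl[\,2(i_1{+}1)(p_1{-}i_1) \;+\; 2(p_1{-}1{-}i_1)(i_1{+}2)\,\Bigr].
\]

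Finally I would carry out the inner polynomial sum in $i_1$ using the standard formulas for $\sum i_1$ and $\sum i_1^2$ over $i_1=0,\ldots,p_1-2$. After expansion and collection (the bracketed expression simplifies to $-4i_1^2 + 4p_1 i_1 - 8i_1 + 6p_1 - 4$), the sum telescopes to $\tfrac{2}{3}p_1(p_1-1)(p_1+4)$, giving
\[
\sum_{i\in I}\mathrm{hw}(f_i) \;=\; \frac{2\,p_1(p_1-1)(p_1+4)\,q_2 q_3}{3}.
\]
To match the stated form, I would substitute $q_2=(p_2-1)/p_1$ and $q_3=(p_3-1)/(p_1p_2)$, which yields $p_1(p_1-1)q_2q_3 = \varphi(p_1p_2p_3)/(p_1p_2)$, producing exactly $\frac{2}{3}\varphi(p_1p_2p_3)(p_1+4)/(p_1p_2)$, and adding $1$ concludes the corollary. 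The only mild obstacle is the algebraic simplification of the cubic-in-$p_1$ sum, but it is routine once the bracketed integrand is expanded; there is no further structural difficulty because the heavy lifting (absence of cancellation) is already supplied by Theorem \ref{thm:exp}-C2.
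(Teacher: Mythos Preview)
Your proposal is correct and follows essentially the same approach as the paper: invoke the no-overlap property from Theorem~\ref{thm:exp}\textsf{-C2} to write $\mathrm{hw}(\Phi_{p_1p_2p_3})=1+\sum_{i\in I}\mathrm{hw}(f_i)$, compute $\mathrm{hw}(f_i)$ as $2(p_1-i_1)$ or $2(i_1+2)$ according to the two cases for $g_i$, pull out the factor $q_2q_3$ from the sums over $i_2,i_4$, split the $i_3$-sum by the condition $i_3\le i_1$, and evaluate the remaining sum over $i_1$ to $\tfrac{2}{3}p_1(p_1-1)(p_1+4)$. The paper's proof is line-for-line the same computation.
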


\begin{proof}
The proof is straightforward from Theorem~\ref{thm:exp}. Thus we give it here.
Note%
\begin{align*}
\mathrm{hw}\left(  \Phi_{p_{1}p_{2}p_{3}}\right)   &  =\sum_{i\in
I}\mathrm{hw}\left(  f_{i}\right)  \ \ +\mathrm{1}\\
&  =\sum_{i\in I}\left(  2+2\mathrm{hw}\left(  g_{i}\right)  \right)  \ \ +1\\
&  =\sum_{i\in I}\left\{
\begin{array}
[c]{ll}%
2(p_{1}-i_{1}) & \text{if }i_{3}\leq i_{1}\\
2(i_{1}+2) & \text{if }i_{3}>i_{1}%
\end{array}
\right.  \ \ +\ 1\\
&  =q_{3}q_{2}\sum_{0\leq i_{1}\leq p_{1}-2}\left(  2(p_{1}-i_{1})\left(
i_{1}+1\right)  +2\left(  i_{1}+2\right)  \left(  p_{1}-1-i_{1}\right)
\right)  \ \ \ +\ 1\\
&  =q_{3}q_{2}\frac{2p_{1}(p_{1}-1)\left(  p_{1}+4\right)  }{3}%
+1\ \ \,\text{(carrying out the summation)}\\
&  =\frac{2}{3}\frac{(p_{3}-1)}{p_{1}p_{2}}\left(  p_{2}-1\right)  \left(
p_{1}-1\right)  (p_{1}+4)+1\\
&  =\frac{2}{3}\frac{\varphi\left(  p_{1}p_{2}p_{3}\right)  (p_{1}+4)}%
{p_{1}p_{2}}+1
\end{align*}

\end{proof}

\begin{remark}
The above formula was derived (with a different and longer proof in \cite{AK} )
\end{remark}

\begin{example}
\label{exm:hw}\mbox{} We illustrate Corollary~\ref{cor:hw} by using two
examples (one small and one large).

\begin{enumerate}
\item Continuing from Example \ref{exm:exp}, let $p_{1}=3,\;p_{2}=13$ and
$p_{3}=79$. From Corollary \ref{cor:hw} we have
\[
{\mathrm{hw}}(\Phi_{p_{1}p_{2}p_{3}})=\frac{2}{3}\frac{(3-1)(13-1)(79-1)(3+4)}%
{\left(  3\right)  \left(  13\right)  }+1=225
\]

\item Let us consider a large example.%
\begin{align*}
p_{1}=  &  \;170141183460469231731687303715884105727=2^{127}-1\\
p_{2}=  &  \;19396094914493492417412352623610788052879\\
p_{3}=  &  \;2772062616341349718440289381107988513974840\\
&  \;91203319282999801642607689554229994773
\end{align*}

Of course $p_{1}$ is a Mersenne prime. The numbers $p_{2}$ and $p_{3}$ are the
smallest primes number such that $p_{2} \equiv1 \mod p_{1}$ and $p_{3} \equiv1
\mod {p_{1}p_{2}}$. It is practically impossible to compute $\Phi_{p_{1}%
p_{2}p_{3}}$. However one can still easily determine its hamming weight using
Corollary \ref{cor:hw}.%
\begin{align*}
{\mathrm{hw}}(\Phi_{p_{1}p_{2}p_{3}})=  &
\;31442800944722794411398673999914603816453631\\
&  \;93783142644102273813658808597364717079870210\\
&  \;3022370537039135233707348104609\\
\approx &  \;10^{118}%
\end{align*}

\end{enumerate}
\end{example}

\noindent The above corollary (Corollary~\ref{cor:hw}) in turn tells us that
the density (number of non-zeros terms\ /\ degree) is roughly inversely
proportional to $p_{2}$, when $p_{2}$ is sufficiently large, as stated in the
next corollary.

\begin{corollary}
[Density]\label{cor:density} Let $p_{2} \equiv1 \mod p_{1}$ and $p_{3} \equiv1
\mod {p_{1}p_{2}}$. Then
\[
\frac{{\mathrm{hw}}\left(  \Phi_{p_{1}p_{2}p_{3}}\right)  }{\deg\left(
\Phi_{p_{1}p_{2}p_{3}}\right)  }\approx\frac{2}{3}\frac{1}{p_{2}}%
\]
when $p_{1}\,$is sufficiently large.
\end{corollary}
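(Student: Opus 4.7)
The plan is to derive the density estimate as a direct corollary of the exact formula in Corollary~\ref{cor:hw}, by dividing through by $\deg(\Phi_{p_{1}p_{2}p_{3}})=\varphi(p_{1}p_{2}p_{3})$ and isolating the dominant term as $p_{1}\to\infty$.

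First, I would apply Corollary~\ref{cor:hw} to write
\[
\frac{\mathrm{hw}(\Phi_{p_{1}p_{2}p_{3}})}{\deg(\Phi_{p_{1}p_{2}p_{3}})}
=\frac{2}{3}\,\frac{p_{1}+4}{p_{1}p_{2}}\;+\;\frac{1}{\varphi(p_{1}p_{2}p_{3})}.
\]
Next, I would observe that $\frac{p_{1}+4}{p_{1}}=1+\frac{4}{p_{1}}\to 1$ as $p_{1}\to\infty$, so the first term on the right is $\tfrac{2}{3p_{2}}\bigl(1+O(1/p_{1})\bigr)$. Since $\varphi(p_{1}p_{2}p_{3})=(p_{1}-1)(p_{2}-1)(p_{3}-1)\to\infty$ as $p_{1}\to\infty$ (note $p_{3}\geq p_{2}\geq p_{1}$ by hypothesis, so $p_{1}$ large forces all three primes to be large), the second term is negligible. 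Collecting, the ratio approaches $\frac{2}{3p_{2}}$, which is the claimed approximation.

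There is no real obstacle here: the statement is essentially an asymptotic reading of the exact formula already established, and the only step requiring any care is to note that the hypotheses $p_{2}\equiv 1\bmod p_{1}$ and $p_{3}\equiv 1\bmod p_{1}p_{2}$ force $p_{2}$ and $p_{3}$ to grow with $p_{1}$, which ensures the error term $1/\varphi(p_{1}p_{2}p_{3})$ is genuinely small compared with $1/p_{2}$. Since the paper uses the informal symbol $\approx$ rather than a quantified bound, no further estimation is required; a one-line calculation suffices.
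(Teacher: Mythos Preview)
Your proposal is correct and follows essentially the same approach as the paper: divide the exact formula from Corollary~\ref{cor:hw} by $\varphi(p_{1}p_{2}p_{3})$ and observe that the dominant term is $\tfrac{2}{3}\,\tfrac{p_{1}+4}{p_{1}p_{2}}\approx \tfrac{2}{3p_{2}}$ for large $p_{1}$. Your added remark that $p_{2}$ and $p_{3}$ are forced to grow with $p_{1}$ is a helpful justification the paper leaves implicit.
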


\begin{proof}
The proof is immediate from Corollary~\ref{cor:hw}. Thus we give it here.
Note
\[
\frac{{\mathrm{hw}}\left(  \Phi_{p_{1}p_{2}p_{3}}\right)  }{\deg\left(
\Phi_{p_{1}p_{2}p_{3}}\right)  }=\frac{\frac{2}{3}\frac{\varphi\left(
p_{1}p_{2}p_{3}\right)  (p_{1}+4)}{p_{1}p_{2}}+1}{\varphi\left(  p_{1}%
p_{2}p_{3}\right)  }\approx\frac{\frac{2}{3}\frac{\varphi\left(  p_{1}%
p_{2}p_{3}\right)  p_{1}}{p_{1}p_{2}}}{\varphi\left(  p_{1}p_{2}p_{3}\right)
}=\frac{2}{3}\frac{1}{p_{2}}%
\]

\end{proof}

\begin{example}
For the large example above (the 2nd one in Example~\ref{exm:hw}), we have
\[
\frac{{\mathrm{hw}}(\Phi_{p_{1}p_{2}p_{3}})}{\deg(\Phi_{p_{1}p_{2}p_{3}}%
)}\approx\frac{2}{3}\frac{1}{p_{2}}\;\approx10^{-40}
\]
Thus the polynomial is extremely sparse.
\end{example}

\section{Proof}

\label{sec:proof}We will prove the main result (Theorem \ref{thm:exp}).
Assume, throughout the section, that
\[
p_{2}\equiv1\operatorname{mod}p_{1}\ \ \ \text{and\ \ \ \ }p_{3}%
\equiv1\operatorname{mod}p_{1}p_{2}%
\]

\noindent Before we plunge into the technical details, we give a bird's eye
view of the whole proof. The proof is divided into several subsections. We
explain what each subsection does.

\begin{enumerate}
\item We partition $\Phi_{p_{1}p_{2}p_{3}}$into several \textquotedblleft
blocks\textquotedblright$f_{i}$. (Lemma~\ref{lem:partition})

\item We express $f_{i}$ in terms of $\Phi_{p_{1}p_{2}}$.
(Lemma~\ref{lem:block})

\item We rewrite the known explicit expression (\ref{l=2})~for $\Phi
_{p_{1}p_{2}}$ \ so that their terms are ordered in the ascending order in
their exponents. (Lemma~\ref{lem:binary})

\item We find an explicit expression for $f_{i}$ using the expression for
$\Phi_{p_{1}p_{2}p_{3}}.$ (Lemma~\ref{lem:fi})

\item We show that the expression does not have overlapping terms and that
their terms are ordered. (Lemma~\ref{lem:no-ordered})

\item Finally we put together all the above results to prove the main result
(Theorem \ref{thm:exp})
\end{enumerate}

\subsection{Partition $\Phi_{p_{1}p_{2}p_{3}}$into $f_{i}$}

We will employ the divide-conquer-combine strategy. Specifically we will
partition $\Phi_{p_{1}p_{2}p_{3}}$ into several parts. Through numerous trial
and errors and careful analysis, we found that the following repeated
partitioning of $\Phi_{p_{1}p_{2}p_{3}}$ is elegant, enlightening and useful.

\begin{enumerate}
\item We partition $\Phi_{p_{1}p_{2}p_{3}}$ under the radix $\rho_{1}=\left(
p_{2}-1\right)  \left(  p_{3}-1\right)  $, obtaining
\[
\Phi_{p_{1}p_{2}p_{3}}=\sum_{0\leq i_{1}\leq p_{1}-2}f_{i_{1}}\ x^{i_{1}%
\rho_{1}}+x^{\varphi\left(  p_{1}p_{2}p_{3}\right)  },\ \ \ \ \ \ \ \deg
f_{i_{1}}<\rho_{1}%
\]
since
\[
\frac{\deg\Phi_{p_{1}p_{2}p_{3}}}{\rho_{1}}=\frac{\left(  p_{1}-1\right)
\left(  p_{2}-1\right)  \left(  p_{3}-1\right)  }{\left(  p_{2}-1\right)
\left(  p_{3}-1\right)  }=p_{1}-1\ \ \ \ \text{and\ \ \ \ }f_{p_{1}%
-1}=x^{\varphi\left(  p_{1}p_{2}p_{3}\right)  }%
\]

\item We partition $f_{i_{1}}$under the radix $\rho_{2}=p_{1}\left(
p_{3}-1\right)  $, obtaining
\[
f_{i_{1}}=\sum_{0\leq i_{2}\leq q_{2}-1}f_{i_{1},i_{2}}x^{i_{2}\rho_{2}%
},\ \ \ \ \ \ \ \deg f_{i_{1},i_{2}}<\rho_{2}\ \ \text{and\ }q_{2}%
={\mathrm{quo}}\left(  p_{2},p_{1}\right)
\]
since%
\[
\frac{\deg f_{i_{1}}}{\rho_{2}}<\frac{\rho_{1}}{\rho_{2}}=\frac{\left(
p_{2}-1\right)  \left(  p_{3}-1\right)  }{p_{1}\left(  p_{3}-1\right)  }%
=\frac{p_{2}-1}{p_{1}}=q_{2}%
\]

\item We partition $f_{i_{1},i_{2}}$under the radix $\rho_{3}=p_{3}-1$,
obtaining%
\[
f_{i_{1},i_{2}}=\sum_{0\leq i_{3}\leq p_{1}-1}f_{i_{1},i_{2},i_{3}}%
x^{i_{3}\rho_{3}},\ \ \ \ \ \ \ \deg f_{i_{1},i_{2},i_{3}}<\rho_{3}%
\]
since%
\[
\frac{\deg f_{i_{1},i_{2}}}{\rho_{3}}<\frac{\rho_{2}}{\rho_{3}}=\frac
{p_{1}\left(  p_{3}-1\right)  }{p_{3}-1}=p_{1}%
\]

\item We partition $f_{i_{1},i_{2},i_{3}}$ under the radix $\rho_{4}%
=p_{1}p_{2}$, obtaining%
\[
f_{i_{1},i_{2},i_{3}}=\sum_{0\leq i_{3}\leq q_{3}-1}f_{i_{1},i_{2},i_{3}%
,i_{4}}x^{i_{4}\rho_{4}},\ \ \ \ \ \ \ \deg f_{i_{1},i_{2},i_{3},i_{4}}%
<\rho_{4}\ \ \ \text{and\ \ }q_{3}={\mathrm{quo}}\left(  p_{3},p_{1}%
p_{2}\right)
\]
since $\frac{\deg f_{i_{1},i_{2},i_{3}}}{\rho_{4}}<\frac{\rho_{3}}{\rho_{4}%
}=\frac{p_{1}-1}{p_{1}p_{2}}=q_{3}$
\end{enumerate}

\noindent Put together we have%
\[
\Phi_{p_{1}p_{2}p_{3}}=\sum_{\substack{0\leq i_{1}\leq p_{1}-2\\0\leq
i_{2}\leq q_{2}-1\\0\leq i_{3}\leq p_{1}-1\\0\leq i_{4}\leq q_{3}-1}%
}f_{i_{1},i_{2},i_{3},i_{4}}x^{i_{1}\rho_{1}+i_{2}\rho_{2}+i_{3}\rho_{3}%
+i_{4}\rho_{4}}+x^{\varphi\left(  p_{1}p_{2}p_{3}\right)  }%
\text{,\ \ \ \ \ \ \ }\deg f_{i_{1},i_{2},i_{3},i_{4}}<\rho_{4}%
\]
\noindent We have proven the following lemma (written more compactly).

\begin{lemma}
[$\Phi_{p_{1}p_{2}p_{3}}$ in terms of $f_{i}$]\label{lem:partition}We have,
for some $f_{i}$,
\[
\Phi_{p_{1}p_{2}p_{3}}=\sum_{i\in I}f_{i}\ x^{i\cdot\rho}\ \ +\ \ x^{\varphi
\left(  p_{1}p_{2}p_{3}\right)  }\text{,\ \ \ \ \ \ \ \ \ }\deg f_{i}<\rho_{4}%
\]
where%
\begin{align*}
I  &  =\left\{  0,\ldots,p_{1}-2\right\}  \times\left\{  0,\ldots
,q_{2}-1\right\}  \times\left\{  0,\ldots,p_{1}-1\right\}  \times\left\{
0,\ldots,q_{3}-1\right\} \\
\rho &  =\left[  \left(  p_{2}-1\right)  \left(  p_{3}-1\right)
,\ p_{1}\left(  p_{3}-1\right)  ,\ p_{3}-1,\ p_{1}p_{2}\right]
\end{align*}

\end{lemma}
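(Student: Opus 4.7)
The plan is to derive the decomposition by iterated base-$x^r$ expansion. The basic fact I would invoke is: for any polynomial $P\in\mathbb{Z}[x]$ of degree $d$ and any positive integer $r$, there exist unique $Q_0,\ldots,Q_m$ with $\deg Q_j<r$ and $m=\lfloor d/r\rfloor$ such that $P=\sum_{j=0}^{m}Q_j\,x^{jr}$. Starting from $\Phi_{p_1p_2p_3}$, I would successively apply this observation with the four radices $\rho_1,\rho_2,\rho_3,\rho_4$ listed in the lemma.

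At the first step, with $r=\rho_1=(p_2-1)(p_3-1)$, the degree of $\Phi_{p_1p_2p_3}$ equals exactly $(p_1-1)\rho_1$ and the leading coefficient is $1$, so the top block in the expansion is $x^{\varphi(p_1p_2p_3)}$ and the remaining blocks $f_{i_1}$ are indexed by $0\le i_1\le p_1-2$ with $\deg f_{i_1}<\rho_1$. Applying the same expansion to each $f_{i_1}$ with $r=\rho_2=p_1(p_3-1)$ yields sub-blocks $f_{i_1,i_2}$ with $0\le i_2\le q_2-1$, since $\rho_1/\rho_2=(p_2-1)/p_1=q_2$; here the hypothesis $p_2\equiv 1\pmod{p_1}$ is exactly what makes this ratio an integer. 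Iterating next with $r=\rho_3=p_3-1$ gives $0\le i_3\le p_1-1$ because $\rho_2/\rho_3=p_1$, and a final expansion with $r=\rho_4=p_1p_2$ gives $0\le i_4\le q_3-1$ because $\rho_3/\rho_4=(p_3-1)/(p_1p_2)=q_3$, using the hypothesis $p_3\equiv 1\pmod{p_1p_2}$.

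Collecting the four indices into a single tuple $i=(i_1,i_2,i_3,i_4)\in I$ and the four radices into the vector $\rho=[(p_2-1)(p_3-1),\,p_1(p_3-1),\,p_3-1,\,p_1p_2]$, the composite exponent becomes $i\cdot\rho$ and the final bound becomes $\deg f_i<\rho_4$, which is exactly the stated conclusion. There is no real obstacle at this step; the whole argument is routine mixed-radix expansion once one verifies (i) each ratio $\rho_k/\rho_{k+1}$ is a positive integer matching the declared index range, which is precisely where the two divisibility hypotheses are consumed, and (ii) the inequality $\deg f_{i_1,\ldots,i_k}<\rho_k$ is preserved through each level so that the next expansion is well-defined. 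Nothing intrinsic to $\Phi_{p_1p_2p_3}$ beyond its degree and monicness is used here; the structural content of the $f_i$ will only emerge in the subsequent lemmas.
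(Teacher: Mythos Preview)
Your proposal is correct and follows essentially the same approach as the paper: an iterated mixed-radix expansion with radices $\rho_1,\rho_2,\rho_3,\rho_4$, using monicness to peel off the leading term $x^{\varphi(p_1p_2p_3)}$ at the first step and the divisibility hypotheses $p_2\equiv1\pmod{p_1}$, $p_3\equiv1\pmod{p_1p_2}$ to make the ratios $\rho_k/\rho_{k+1}$ integral and match the declared index ranges. The paper carries out exactly these four steps and assembles them into the stated decomposition.
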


\subsection{Express $f_{i}$ in terms of $\Phi_{p_{1}p_{2}}$}

In the previous subsection, we explicitly expressed $\Phi_{p_{1}p_{2}p_{3}}$
in terms of $f_{i}$'s. Now in this subsection, we will express $f_{i}$ in
terms of $\Phi_{p_{1}p_{2}}$.

\begin{lemma}
[$f_{i}$ in terms of $\Phi_{p_{1}p_{2}}$]\label{lem:block} Let $u=i_{1}%
(p_{2}-1)+i_{2}p_{1}+i_{3}$. Then we have
\[
f_{i}=-\Psi_{p_{1}p_{2}}\cdot\mathcal{T}_{u+1}\Phi_{p_{1}p_{2}},
\]
where $\Psi_{p_{1}p_{2}}\left(  x\right)  =\frac{x^{p_{1}p_{2}}-1}{\Phi
_{p_{1}p_{2}}\left(  x\right)  }$ and$\ \ \mathcal{T}_{s}\left(  \cdot\right)
={\mathrm{rem}}(\cdot,x^{s})$
\end{lemma}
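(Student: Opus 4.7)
The plan is to combine two classical identities: first, $\Phi_{p_1p_2p_3}(x) = \Phi_{p_1p_2}(x^{p_3})/\Phi_{p_1p_2}(x)$, valid since $p_3 \nmid p_1p_2$; second, the formal power series expansion $1/\Phi_{p_1p_2}(x) = -\Psi_{p_1p_2}(x) \sum_{k \geq 0} x^{k p_1 p_2}$, which comes immediately from $\Phi_{p_1p_2}\,\Psi_{p_1p_2} = x^{p_1p_2}-1$. Multiplying gives the master identity
$$\Phi_{p_1p_2p_3}(x) \;=\; -\Psi_{p_1p_2}(x)\,\Phi_{p_1p_2}(x^{p_3})\sum_{k \geq 0} x^{k p_1 p_2},$$
which I will compare, block by block, against the partition supplied by Lemma~\ref{lem:partition}.

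The crucial algebraic simplification is that, because $p_3 - 1 = q_3\, p_1 p_2$, the exponent $i \cdot \rho$ is always a multiple of $\rho_4 = p_1 p_2$. Writing $u := i_1(p_2-1) + i_2 p_1 + i_3$, one has
$$i \cdot \rho \;=\; u(p_3-1) + i_4\, p_1 p_2 \;=\; (uq_3 + i_4)\, p_1 p_2.$$
Thus Lemma~\ref{lem:partition} in fact just chops $\Phi_{p_1p_2p_3}$ into contiguous blocks of width $p_1p_2$, and $f_i(x)$ is the block at index $m^{\star} := uq_3 + i_4$.

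Writing $\Phi_{p_1p_2}(x) = \sum_j a_j x^j$ and using $x^{jp_3} = x^{j}\, x^{jq_3 p_1 p_2}$, the master identity rearranges as
$$\Phi_{p_1p_2p_3}(x) \;=\; \sum_j \sum_{m \geq j q_3} \bigl(-a_j\,\Psi_{p_1p_2}(x)\, x^j\bigr)\, x^{m p_1 p_2}.$$
For $j \leq \varphi(p_1p_2) - 1$, the polynomial $-a_j \Psi_{p_1p_2}(x)\, x^j$ has support $[j,\, j + p_1 + p_2 - 1] \subseteq [0,\, p_1p_2 - 1]$, so it contributes cleanly to a single block. The contributions to the block at $m = m^{\star}$ come from those $j$ with $jq_3 \leq m^{\star}$; since $i_4 < q_3$, this is equivalent in integers to $j \leq u$. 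Summing over these,
$$f_i(x) \;=\; -\Psi_{p_1p_2}(x)\sum_{j=0}^{u} a_j\, x^j \;=\; -\Psi_{p_1p_2}(x)\cdot \mathcal{T}_{u+1}\Phi_{p_1p_2}(x),$$
as claimed.

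The one place where genuine care is required, and which I expect to be the main obstacle, is the exceptional top term $j = \varphi(p_1p_2)$: here $-a_j \Psi_{p_1p_2}(x)\, x^j$ extends exactly one unit past the block, up to exponent $p_1p_2$, so it could in principle leak into the block at $m+1$. I will show this never happens by verifying that, for $j = \varphi(p_1p_2)$, the admissibility bound $m \geq jq_3 = \varphi(p_1p_2)\, q_3$ is incompatible with $m = m^{\star} - 1$: indeed $m^{\star} - 1 = uq_3 + i_4 - 1 < \varphi(p_1p_2)\, q_3$, which is exactly what the ranges $u \leq \varphi(p_1p_2) - 1$ and $i_4 \leq q_3 - 1$ are designed to give. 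Once this single boundary check is disposed of, everything else is routine coefficient bookkeeping.
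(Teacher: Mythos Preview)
Your argument is correct and follows essentially the same route as the paper: start from the master identity $\Phi_{p_1p_2p_3} = -\Psi_{p_1p_2}\,\Phi_{p_1p_2}(x^{p_3})\sum_{k\ge 0}x^{kp_1p_2}$, use $p_3-1=q_3p_1p_2$ to see that everything is naturally organized into width-$p_1p_2$ blocks indexed by $m=uq_3+i_4$, and read off the block at $m^\star$ as $-\Psi_{p_1p_2}\sum_{j=0}^{u}a_jx^j$. The only cosmetic difference is the treatment of the leading term $j=\varphi(p_1p_2)$: the paper sums those contributions explicitly, observes $h_u=1-x^{p_1p_2}$ for $u\ge\varphi(p_1p_2)$, and telescopes twice to produce the standalone $x^{\varphi(p_1p_2p_3)}$, whereas you invoke Lemma~\ref{lem:partition} (which has already peeled off that term) and simply verify that no overflow from $j=\varphi(p_1p_2)$ reaches any block $m^\star$ in the index range. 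Both are fine; your version is a bit more economical since it leans on the partition lemma rather than rederiving it.
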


\begin{proof}
Note
\begin{align*}
\Phi_{p_{1}p_{2}p_{3}}  &  =\frac{\Phi_{p_{1}p_{2}}\left(  x^{p_{3}}\right)
}{\Phi_{p_{1}p_{2}}} &  & \\
&  =-\ \Phi_{p_{1}p_{2}}\left(  x^{p_{3}}\right)  \ \Psi_{p_{1}p_{2}}%
\ \frac{1}{1-x^{p_{1}p_{2}}} &  & \\
&  =-\ \Phi_{p_{1}p_{2}}\left(  x^{p_{3}}\right)  \ \Psi_{p_{1}p_{2}}%
\ \sum_{t\geq0}x^{tp_{1}p_{2}} &  &  \text{by carrying out a formal expansion
of }\frac{1}{1-x^{p_{1}p_{2}}}\\
&  =-\ \sum_{s\geq0}a_{s}x^{sp_{3}}\ \Psi_{p_{1}p_{2}}\ \sum_{t\geq0}%
x^{tp_{1}p_{2}} &  &  \text{where }\Phi_{p_{1}p_{2}}\left(  x\right)
=\sum_{s\geq0}a_{s}x^{s}\text{, \ }a_{s}=0\ \text{for }s>\varphi\left(
p_{1}p_{2}\right) \\
&  =-\ \sum_{s\geq0}a_{s}x^{sq_{3}p_{1}p_{2}}x^{s}\ \Psi_{p_{1}p_{2}}%
\ \sum_{t\geq0}x^{tp_{1}p_{2}} &  &  \text{since }p_{3}=q_{3}p_{1}p_{2}+1\\
&  =-\ \sum_{s\geq0}a_{s}x^{sq_{3}p_{1}p_{2}}x^{s}\ \Psi_{p_{1}p_{2}}%
\sum_{u\geq0}\sum_{v=0}^{q_{3}-1}x^{(uq_{3}+v)p_{1}p_{2}} &  &  \text{where
}u={\mathrm{quo}}(t,q_{3}),~v={\mathrm{rem}}(t,q_{3})\\
&  =-\ \sum_{v=0}^{q_{3}-1}\Psi_{p_{1}p_{2}}\sum_{u\geq0,\ s\geq0}a_{s}%
x^{s}\ x^{(u+s)q_{3}p_{1}p_{2}+vp_{1}p_{2}} &  &  \text{by reordering and
combining }\\
&  =-\ \sum_{v=0}^{q_{3}-1}\Psi_{p_{1}p_{2}}\sum_{u\geq s,\ s\geq0\ }%
a_{s}x^{s}\ x^{uq_{3}p_{1}p_{2}+vp_{1}p_{2}} &  &  \text{by reindexing
}u+s\ \text{with }u\\
&  =-\ \sum_{v=0}^{q_{3}-1}\Psi_{p_{1}p_{2}}\sum_{u\geq0}\sum_{s=0}^{u}%
a_{s}x^{s}\ x^{uq_{3}p_{1}p_{2}+vp_{1}p_{2}} &  &  \text{by rewriting }%
\sum_{u\geq s,\ s\geq0\ }\ \text{into an iterated sum}\\
&  =\sum_{u\geq0}\sum_{v=0}^{q_{3}-1}\left(  -\ \Psi_{p_{1}p_{2}}\sum
_{s=0}^{u}a_{s}x^{s}\right)  \ x^{uq_{3}p_{1}p_{2}+vp_{1}p_{2}} &  &  \text{by
reordering}%
\end{align*}
Let
\[
h_{u}=-\Psi_{p_{1}p_{2}}\sum_{s=0}^{u}a_{s}x^{s}%
\]
For $u\geq\varphi(p_{1}p_{2}),$ we have
\[
h_{u}=-\Psi_{p_{1}p_{2}}~\sum_{s=0}^{\varphi(p_{1}p_{2})}a_{s}x^{s}%
=-\Psi_{p_{1}p_{2}}\Phi_{p_{1}p_{2}}=1-x^{p_{1}p_{2}}%
\]
Therefore we have
\begin{align*}
\Phi_{p_{1}p_{2}p_{3}}  &  =-~\sum_{u=0}^{\varphi(p_{1}p_{2})-1}\sum
_{v=0}^{q_{3}-1}h_{u}x^{uq_{3}p_{1}p_{2}+vp_{1}p_{2}}+\sum_{u\geq\varphi
(p_{1}p_{2})}\sum_{v=0}^{q_{3}-1}(1-x^{p_{1}p_{2}})x^{uq_{3}p_{1}p_{2}%
+vp_{1}p_{2}} &  & \\
&  =-~\sum_{u=0}^{\varphi(p_{1}p_{2})-1}\sum_{v=0}^{q_{3}-1}h_{u}%
x^{uq_{3}p_{1}p_{2}+vp_{1}p_{2}}+\sum_{u\geq\varphi(p_{1}p_{2})}x^{uq_{3}%
p_{1}p_{2}}\sum_{v=0}^{q_{3}-1}\left(  x^{vp_{1}p_{2}}-x^{\left(  v+1\right)
p_{1}p_{2}}\right)  &  &  \text{by rearranging}\\
&  =-~\sum_{u=0}^{\varphi(p_{1}p_{2})-1}\sum_{v=0}^{q_{3}-1}h_{u}%
x^{uq_{3}p_{1}p_{2}+vp_{1}p_{2}}+\sum_{u\geq\varphi(p_{1}p_{2})}x^{uq_{3}%
p_{1}p_{2}}(1-x^{q_{3}p_{1}p_{2}}) &  &  \text{by telescoping sum}\\
&  =-~\sum_{u=0}^{\varphi(p_{1}p_{2})-1}\sum_{v=0}^{q_{3}-1}h_{u}%
x^{uq_{3}p_{1}p_{2}+vp_{1}p_{2}}+\sum_{u\geq\varphi(p_{1}p_{2})}\left(
x^{uq_{3}p_{1}p_{2}}-x^{(u+1)q_{3}p_{1}p_{2}}\right)  &  &  \text{by
rearranging}\\
&  =-~\sum_{u=0}^{\varphi(p_{1}p_{2})-1}\sum_{v=0}^{q_{3}-1}h_{u}%
x^{uq_{3}p_{1}p_{2}+vp_{1}p_{2}}+x^{\varphi(p_{1}p_{2})q_{3}p_{1}p_{2}} &  &
\text{by telescoping sum}\\
&  =-~\sum_{u=0}^{\varphi(p_{1}p_{2})-1}\sum_{v=0}^{q_{3}-1}h_{u}x^{u\left(
p_{3}-1\right)  +vp_{1}p_{2}}+x^{\varphi(p_{1}p_{2}p_{3})} &  &  \text{since
}q_{3}p_{1}p_{2}=p_{3}-1
\end{align*}

\noindent Since $0\leq u\leq$ $\varphi(p_{1}p_{2})-1$, we can write $u$ as
\[
u=i_{1}(p_{2}-1)+i_{2}p_{1}+i_{3}%
\]
such that $0\leq i_{1}\leq p_{1}-2,\ 0\leq i_{2}\leq q_{2}-1\ $and $\ 0\leq
i_{3}\leq p_{1}-1$. Let us also rename $v$ as $i_{4}$. Then we have%
\begin{align*}
\Phi_{p_{1}p_{2}p_{3}}  &  =\sum_{\substack{0\leq i_{1}\leq p_{1}-2\\0\leq
i_{2}\leq q_{2}-1\\0\leq i_{3}\leq p_{1}-1\\0\leq i_{4}\leq q_{3}-1}%
}h_{u}\ x^{\left(  i_{1}(p_{2}-1)+i_{2}p_{1}+i_{3}\right)  \left(
p_{3}-1\right)  +i_{4}p_{1}p_{2}}+x^{\varphi\left(  p_{1}p_{2}p_{3}\right)  }
&  & \\
&  =\sum_{\substack{0\leq i_{1}\leq p_{1}-2\\0\leq i_{2}\leq q_{2}-1\\0\leq
i_{3}\leq p_{1}-1\\0\leq i_{4}\leq q_{3}-1}}h_{u}\ x^{i_{1}(p_{2}-1)\left(
p_{3}-1\right)  +i_{2}p_{1}\left(  p_{3}-1\right)  +i_{3}\left(
p_{3}-1\right)  +i_{4}p_{1}p_{2}}+x^{\varphi\left(  p_{1}p_{2}p_{3}\right)  }
&  &  \text{by expanding}\\
&  =\sum_{i\in I}h_{u}\ x^{i\cdot\rho}\ \ +\ \ x^{\varphi\left(  p_{1}%
p_{2}p_{3}\right)  }+x^{\varphi\left(  p_{1}p_{2}p_{3}\right)  } &  &
\text{by recalling }\rho\ \text{and }I
\end{align*}
Note that, for all $u$ in $0\leq u\leq\varphi(p_{1}p_{2})-1$, we have
\[
\deg(h_{u})\leq\deg(\Psi_{p_{1}p_{2}})+u<p_{1}p_{2}=\rho_{4}%
\]
Thus $h_{u}=f_{i}$. Hence, we have%
\[
f_{i}=-\Psi_{p_{1}p_{2}}\sum_{s=0}^{u}a_{s}x^{s}=-\Psi_{p_{1}p_{2}}%
\mathcal{T}_{u+1}\Phi_{p_{1}p_{2}}%
\]

\end{proof}

\subsection{Find an ordered explicit expression for $\Phi_{p_{1}p_{2}}$}

In the previous subsection, we expressed $f_{i}$'s in terms of $\Phi
_{p_{1}p_{2}}$. In this subsection, we find an ordered explicit expression for
$\Phi_{p_{1}p_{2}}$.

\begin{lemma}
[Ordered explicit expression for $\Phi_{p_{1}p_{2}}$]\label{lem:binary}%
\[
\Phi_{p_{1}p_{2}}=1+\sum_{a=0}^{p_{1}-2}\sum_{b=0}^{q_{2}-1}\left(
-x^{a(p_{2}-1)+bp_{1}+a+1}+x^{a(p_{2}-1)+(b+1)p_{1}}\right)
\]

\end{lemma}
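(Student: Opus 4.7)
The plan is to deduce Lemma~\ref{lem:binary} from the known binary formula (\ref{l=2}) by specializing the residues $s_1,s_2$ and then performing one reindexing.

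First, under the hypothesis $p_2\equiv 1\bmod p_1$, it is immediate that $s_2=p_2^{-1}\bmod p_1=1$. Writing $p_2=q_2p_1+1$, the congruence $p_1(-q_2)\equiv 1\bmod p_2$ shows $s_1\equiv -q_2\bmod p_2$, so $s_1=p_2-q_2$. Consequently $p_2-s_1=q_2$ and $p_1-s_2=p_1-1$, and (\ref{l=2}) collapses to
\[
\Phi_{p_1p_2}\;=\;\sum_{i=0}^{p_2-q_2-1}x^{ip_1}\;-\;\sum_{i=0}^{q_2-1}\sum_{j=0}^{p_1-2}x^{ip_1+jp_2+1}.
\]

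Next, I would convert each piece to the form stated in the lemma. For the negative sum, a pure renaming $a:=j$, $b:=i$ combined with the identity $bp_1+ap_2+1=a(p_2-1)+bp_1+a+1$ reproduces the negative terms of the lemma exactly. For the positive sum, I would use the Euclidean representation $i=aq_2+b+1$: the map $(a,b)\mapsto aq_2+b+1$ is a bijection from $\{0,\ldots,p_1-2\}\times\{0,\ldots,q_2-1\}$ onto $\{1,\ldots,(p_1-1)q_2\}=\{1,\ldots,p_2-q_2-1\}$. Pulling off the $i=0$ term as the constant $1$ and applying $(aq_2+b+1)p_1=a(p_2-1)+(b+1)p_1$ converts the remaining positive terms into the positive contribution of the lemma.

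There is no genuine analytic obstacle; every step is either a substitution or a change of index, so the only care needed is verifying the bijection in the last step and the two algebraic identities, which both rest on the relation $p_2=q_2p_1+1$. The ordered character of the resulting expression (highlighted in the subsection title) is then a short direct check: for fixed $a$, the negative exponent $a(p_2-1)+bp_1+a+1$ and the positive exponent $a(p_2-1)+(b+1)p_1$ interleave strictly in ascending order because $0<a+1\le p_1-1<p_1$, and the $a$-block ends at $(a+1)(p_2-1)$ while the next block begins above that, so the entire sum is listed in increasing order of exponents.
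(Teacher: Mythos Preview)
Your proposal is correct and follows essentially the same route as the paper: specialize $s_1=p_2-q_2$, $s_2=1$ in (\ref{l=2}), split off the constant $1$, reindex the positive sum via $i=aq_2+b+1$, and use $q_2p_1=p_2-1$ to match exponents. The only addition is that you spell out the ordering verification a bit more explicitly than the paper, which simply asserts it at the end.
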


\begin{proof}
We will use the known explicit expression (\ref{l=2}) for $\Phi_{p_{1}p_{2}}$
given in the introduction. From the condition $p_{2}\equiv1\operatorname{mod}%
p_{1}$, we have $s_{1}=p_{1}^{-1}\operatorname{mod}p_{2}=p_{2}-q_{2}$ and
$s_{2}=p_{2}^{-1}\operatorname{mod}p_{1}=1$. Thus, from the explicit formula
(\ref{l=2}) of $\Phi_{p_{1}p_{2}}$, we have
\begin{align*}
\Phi_{p_{1}p_{2}} &  =\sum_{\substack{0\leq b<p_{2}-q_{2}\\0\leq
a<1}}x^{bp_{1}+ap_{2}}\;\;-\;\;\sum_{\substack{0\leq b<q_{2}\\0\leq a<p_{1}%
-1}}x^{bp_{1}+ap_{2}+1} &  & \\
&  =\sum_{b=0}^{q_{2}(p_{1}-1)}x^{bp_{1}}\;\;-\;\;\sum_{a=0}^{p_{1}-2}%
\sum_{b=0}^{q_{2}-1}x^{ap_{2}+bp_{1}+1} &  &  \text{since }p_{2}-q_{2}%
=q_{2}(p_{1}-1)+1\\
&  =1+\sum_{b=1}^{q_{2}(p_{1}-1)}x^{bp_{1}}\;\;-\;\;\sum_{a=0}^{p_{1}-2}%
\sum_{b=0}^{q_{2}-1}x^{ap_{2}+bp_{1}+1} &  &  \text{by separating out the
first }b=0\\
&  =1+\sum_{a=0}^{p_{1}-2}\sum_{b=0}^{q_{2}-1}x^{(aq_{2}+b+1)p_{1}%
}\;\;-\;\;\sum_{a=0}^{p_{1}-2}\sum_{b=0}^{q_{2}-1}x^{ap_{2}+bp_{1}+1} &  &
\text{by reindexing the first }b\ \text{with }aq_{2}+b+1\\
&  =1+\sum_{a=0}^{p_{1}-2}\sum_{b=0}^{q_{2}-1}\left(  -x^{ap_{2}+bp_{1}%
+1}+x^{(aq_{2}+b+1)p_{1}}\right)   &  &  \text{by combining the sums and
reordering}\\
&  =1+\sum_{a=0}^{p_{1}-2}\sum_{b=0}^{q_{2}-1}\left(  -x^{a(p_{2}%
-1)+bp_{1}+a+1}+x^{a(p_{2}-1)+(b+1)p_{1}}\right)   &  &  \text{since }%
q_{2}p_{1}=p_{2}-1
\end{align*}
Note the terms are ordered in the ascending order of their exponents.
\end{proof}

\subsection{Find an explicit expression for $f_{i}$}

Combing the results from the previous two subsections, in this subsection, we
find an explicit expression for~$f_{i}$.

\begin{lemma}
\label{lem:trunation} Let $u=i_{1}(p_{2}-1)+i_{2}p_{1}+i_{3}$. We have
\[
\mathcal{T}_{u+1}\Phi_{p_{1}p_{2}}=1+\sum_{a=0}^{i_{1}}x^{ap_{2}}%
\sum_{\substack{0\leq b\leq q_{2}-1\;\;\;\;\;\text{if }a<i_{1}\\0\leq b\leq
i_{2}-1\;\;\;\;\;\;\text{if }a=i_{1}}}\ x^{bp_{1}}\left(  -x+x^{p_{1}%
-a}\right)  -x^{i_{1}p_{2}+i_{2}p_{1}}\mathcal{T}_{i_{3}-i_{1}+1}\left(
x\right)
\]

\end{lemma}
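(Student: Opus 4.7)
The plan is to substitute the ordered expansion of $\Phi_{p_{1}p_{2}}$ from Lemma \ref{lem:binary} and then carry out the truncation $\rem(\cdot, x^{u+1})$ by inspection. Since Lemma \ref{lem:binary} lists the terms of $\Phi_{p_{1}p_{2}}$ in strictly increasing order of exponents, the truncation amounts to identifying the largest pair $(a, b)$ whose associated exponent is still $\le u$. A preliminary rewriting of the bracketed pair as
$$-x^{a(p_{2}-1)+bp_{1}+a+1} + x^{a(p_{2}-1)+(b+1)p_{1}} = x^{ap_{2}+bp_{1}}\bigl(-x + x^{p_{1}-a}\bigr)$$
already brings the expression into the outer shape of the right-hand side of Lemma \ref{lem:trunation}.

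Next I would split the sum over $a \in \{0, \ldots, p_{1}-2\}$ into three regimes by comparison with $i_{1}$. For $a \le i_{1}-1$, the largest exponent appearing (the $+$ term at $b=q_{2}-1$) is $(a+1)(p_{2}-1) \le i_{1}(p_{2}-1) \le u$, so every term with this $a$ survives. For $a \ge i_{1}+1$, the smallest exponent appearing (the $-$ term at $b=0$) is at least $(i_{1}+1)(p_{2}-1)+1$, which exceeds $u$ because $u-i_{1}(p_{2}-1) = i_{2}p_{1}+i_{3} \le (q_{2}-1)p_{1}+(p_{1}-1) = p_{2}-2$; so no term with such an $a$ survives. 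Together these two regimes produce exactly the sub-sum for $a=0,\ldots,i_{1}-1$ with $b=0,\ldots,q_{2}-1$ in the claimed right-hand side.

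The remaining case $a=i_{1}$ requires a finer analysis. Using $0 \le i_{2} \le q_{2}-1$ and $0 \le i_{3} \le p_{1}-1$, direct comparison of exponents shows that the $+$ term at $(i_{1},b)$ lies in range iff $b \le i_{2}-1$, while the $-$ term at $(i_{1},b)$ lies in range iff $b \le i_{2}-1$, with the one extra $-$ term at $b=i_{2}$ surviving precisely when $i_{3} \ge i_{1}+1$. The pairs with $b \le i_{2}-1$ assemble into $\sum_{b=0}^{i_{2}-1} x^{bp_{1}}(-x + x^{p_{1}-i_{1}})$, completing the $a=i_{1}$ slice of the outer sum. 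The possibly extra $-$ term is $-x^{i_{1}p_{2}+i_{2}p_{1}+1}$, which is packaged as $-x^{i_{1}p_{2}+i_{2}p_{1}}\,\mathcal{T}_{i_{3}-i_{1}+1}(x)$ using the observation that $\mathcal{T}_{s}(x) = x$ when $s \ge 2$ and $\mathcal{T}_{s}(x) = 0$ when $s \le 1$.

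The main obstacle is the boundary bookkeeping in the $a=i_{1}$ case: the inclusion criterion for the $-$ term at $b=i_{2}$ flips on the comparison of $i_{3}$ with $i_{1}$, and one must then recognize that this single-monomial correction is encoded concisely by $\mathcal{T}_{i_{3}-i_{1}+1}(x)$. Once that translation is in hand, the rest of the argument reduces to the routine exponent comparisons sketched above.
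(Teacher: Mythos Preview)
Your proposal is correct and follows essentially the same approach as the paper: start from the ordered expansion of $\Phi_{p_1p_2}$ in Lemma~\ref{lem:binary}, determine by direct exponent comparison which $(a,b)$-pairs survive the truncation $\mathcal{T}_{u+1}$, and package the lone boundary monomial at $(a,b)=(i_1,i_2)$ as $-x^{i_1p_2+i_2p_1}\,\mathcal{T}_{i_3-i_1+1}(x)$. Your three-regime split ($a<i_1$, $a=i_1$, $a>i_1$) and the paper's two survival checks are the same computation organized slightly differently; if anything, you are a bit more explicit about the non-surviving regime $a\ge i_1+1$.
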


\begin{proof}
Note the followings.

\begin{itemize}
\item[If ] $0\leq a<i_{1},~0\leq b\leq q_{2}-1$ then we have
\begin{align*}
u+1  &  >i_{1}(p_{2}-1) &  &  \text{since }i_{2},i_{3}\geq0\\
&  \geq(i_{1}-1)(p_{2}-1)+q_{2}p_{1} &  &  \text{since }p_{2}-1=q_{2}p_{1}\\
&  \geq a(p_{2}-1)+(b+1)p_{1} &  & \\
u+1  &  >i_{1}(p_{2}-1)-p_{1}+i_{1} &  &  \text{since }i_{1}<p_{1}\\
&  =(i_{1}-1)(p_{2}-1)+(q_{2}-1)p_{1}+i_{1} &  &  \text{since }p_{2}%
-1=q_{2}p_{1}\\
&  \geq a(p_{2}-1)+bp_{1}+a+1 &  &
\end{align*}

%\begin{align*}
%a(p_{2}-1)+(b+1)p_{1}   &  \leq (i_{1}-1)(p_{2}-1)+q_{2}p_{1} \\
%& = i_{1}(p_{2}-1) &&\text{from }p_{2}-1=q_{2}p_{1}\\
%& < \iota+1  &&\text{from }  i_{2},i_{3} \geq 0\\
%a(p_{2}-1)+bp_{1}+a+1 & \leq (i_{1}-1)(p_{2}-1)+(q_{2}-1)p_{1} +i_{1} \\
%&=i_{1}(p_{2}-1)-p_{1}+i_{1}&&\text{from }p_{2}-1=q_{2}p_{1}\\
%& < \iota+1  &&\text{from }  i_{1}< p_{1}
%\end{align*}

\item[If] $a=i_{1},~0\leq b\leq i_{2}-1$ then we have
\begin{align*}
u+1  &  >i_{1}(p_{2}-1)+i_{2}p_{1} &  &  \text{since }i_{3}+1>0\\
&  \geq a(p_{2}-1)+(b+1)p_{1} &  & \\
u+1  &  >i_{1}(p_{2}-1)+(i_{2}-1)p_{1}+i_{1}+1 &  &  \text{since }i_{1}%
<p_{1}\\
&  \geq a(p_{2}-1)+bp_{1}+a+1 &  &
\end{align*}

%\begin{align*}
%a(p_{2}-1)+(b+1)p_{1} & \leq i_{1}(p_{2}-1)+i_{2}p_{1}  \\
%& < \iota+1    &&\text{from }  i_{3}+1 > 0 \\
%a(p_{2}-1)+bp_{1}+a+1 &\leq i_{1}(p_{2}-1)+(i_{2}-1)p_{1} +i_{1}+1\\
%& = i_{1}(p_{2}-1)+i_{2}p_{1}-p_{1}+i_{1}+1 \\
%& < \iota+1    &&\text{from }  i_{1}< p_{1}
%\end{align*}

\end{itemize}

\noindent Thus we have
\begin{align*}
\mathcal{T}_{u+1}\Phi_{p_{1}p_{2}} &  =\mathcal{T}_{u+1}\left(  1+\sum
_{a=0}^{p_{1}-2}\sum_{b=0}^{q_{2}-1}\left(  -x^{a(p_{2}-1)+bp_{1}%
+a+1}+x^{a(p_{2}-1)+(b+1)p_{1}}\right)  \right)   &  &  \text{from
Lemma}\ \ref{lem:binary}\\
&  =1+\sum_{a=0}^{i_{1}}\sum_{\substack{~0\leq b\leq q_{2}-1\;\;\text{if
}a<i_{1}\\~0\leq b\leq i_{2}-1\;\;\text{if }a=i_{1}}}\ \left(  -x^{a(p_{2}%
-1)+bp_{1}+a+1}+x^{a(p_{2}-1)+(b+1)p_{1}}\right)   &  & \\
&  ~~~+\mathcal{T}_{u+1}\left(  -x^{i_{1}p_{2}+i_{2}p_{1}+1}+x^{i_{1}\left(
p_{2}-1\right)  +i_{2}p_{1}+p_{1}}\right)   &  & \\
&  =1~~+~~\sum_{a=0}^{i_{1}}x^{ap_{2}}\sum_{\substack{0\leq b\leq
q_{2}-1\;\;\text{if }a<i_{1}\\0\leq b\leq i_{2}-1\;\;\text{if }a=i_{1}%
}}\ x^{bp_{1}}\left(  -x+x^{p_{1}-a}\right)   &  & \\
&  ~~~-x^{i_{1}p_{2}+i_{2}p_{1}}\mathcal{T}_{i_{3}-i_{1}+1}\left(  x\right)
&  &  \text{since }i_{1}\left(  p_{2}-1\right)  +i_{2}p_{1}+p_{1}>u\\
&  &  &  \text{and }i_{3}<p_{1}%
\end{align*}

\end{proof}

\begin{lemma}
\label{lem:pt} We have
\[
\Phi_{p_{1}}\mathcal{T}_{u+1}\Phi_{p_{1}p_{2}}=\sum_{a=0}^{i_{1}}x^{ap_{2}%
}+x^{i_{1}p_{2}}x^{i_{2}p_{1}}\sum_{k=1}^{p_{1}-1-i_{1}}x^{k}+x^{i_{1}p_{2}%
}x^{i_{2}p_{1}}\mathcal{T}_{i_{3}-i_{1}+1}\left(  -x\right)  \sum_{k=0}%
^{p_{1}-1}x^{k}%
\]

\end{lemma}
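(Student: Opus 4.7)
The plan is to start from the explicit formula for $\mathcal{T}_{u+1}\Phi_{p_1 p_2}$ given by Lemma~\ref{lem:trunation} and multiply through by $\Phi_{p_1}$. Everything hinges on one algebraic identity: since $-x + x^{p_1-a} = x(x-1)(1+x+\cdots+x^{p_1-a-2})$ and $(x-1)\Phi_{p_1}(x) = x^{p_1}-1$, one obtains
\[
\Phi_{p_1}\bigl(-x + x^{p_1-a}\bigr) \;=\; (x^{p_1}-1)\sum_{k=1}^{p_1-a-1} x^k.
\]
The factor $x^{p_1}-1$ is precisely what makes the subsequent sums over $b$ telescope.

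Plugging this in, and using $q_2 p_1 = p_2 - 1$, the inner sum over $b \in \{0,\ldots,q_2-1\}$ for each $a < i_1$ collapses to $(x^{(a+1)p_2-1} - x^{ap_2})\sum_{k=1}^{p_1-a-1} x^k$, while the partial inner sum over $b \in \{0,\ldots,i_2-1\}$ at $a = i_1$ collapses to $(x^{i_1 p_2 + i_2 p_1} - x^{i_1 p_2})\sum_{k=1}^{p_1-i_1-1} x^k$. The trailing correction $-\Phi_{p_1} x^{i_1 p_2 + i_2 p_1}\mathcal{T}_{i_3-i_1+1}(x)$ is then converted to $+\Phi_{p_1} x^{i_1 p_2 + i_2 p_1}\mathcal{T}_{i_3-i_1+1}(-x)$ by linearity of $\mathcal{T}_s$, which matches the third summand of the target exactly.

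What remains is the purely polynomial identity
\[
\Phi_{p_1} \;+\; \sum_{a=0}^{i_1-1}\bigl(x^{(a+1)p_2-1} - x^{ap_2}\bigr)\sum_{k=1}^{p_1-a-1} x^k \;-\; x^{i_1 p_2}\sum_{k=1}^{p_1-i_1-1} x^k \;=\; \sum_{a=0}^{i_1} x^{ap_2}.
\]
I verify this by rewriting $x^{(a+1)p_2-1}\sum_{k=1}^{p_1-a-1}x^k = x^{(a+1)p_2}(1+x+\cdots+x^{p_1-a-2})$ and reindexing $a \mapsto a+1$ on that half; using $(1+x+\cdots+x^n) - (x+\cdots+x^n) = 1$, the $a$-differences cancel everything except the boundary contributions $\sum_{a=1}^{i_1} x^{ap_2}$, $x^{i_1 p_2}\sum_{k=1}^{p_1-i_1-1}x^k$, and $-(x+\cdots+x^{p_1-1})$, which combined with $\Phi_{p_1}$ and the standalone $-x^{i_1 p_2}\sum_{k=1}^{p_1-i_1-1}x^k$ collapse to $\sum_{a=0}^{i_1} x^{ap_2}$.

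The computation is almost entirely mechanical once the factorization of $-x + x^{p_1-a}$ is spotted; the only pitfalls I anticipate are the boundary conventions for $\mathcal{T}_s$ when $s \leq 0$ (so that the correction term vanishes in those cases), the $i_2 = 0$ case where the $a = i_1$ inner sum is empty, and the $i_1 = 0$ case where the outer sum on $a$ is empty. All three collapse harmlessly because the key identity for $\Phi_{p_1}(-x+x^{p_1-a})$ holds uniformly across these ranges.
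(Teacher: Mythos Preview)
Your proposal is correct and follows essentially the same route as the paper: start from Lemma~\ref{lem:trunation}, use the identity $\Phi_{p_1}(-x+x^{p_1-a})=(x^{p_1}-1)\sum_{k=1}^{p_1-a-1}x^k$ (the paper obtains the identical expression by direct expansion and calls it $A$), telescope the $b$-sum, then telescope the $a$-sum via the reindex $a\mapsto a+1$, and finally combine with $\Phi_{p_1}=\sum_{k=0}^{p_1-1}x^k$ to get $\sum_{a=0}^{i_1}x^{ap_2}$. The only organisational difference is that you peel off the second target summand $x^{i_1 p_2+i_2 p_1}\sum_{k=1}^{p_1-1-i_1}x^k$ before the $a$-telescope, whereas the paper carries it through its intermediate quantity $C$ and separates it at the end; this is cosmetic.
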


\begin{proof}
%(1-x^{q_2p_1})+x^{q_2p_1}\sum_{k=0}^{p_1-1}x^k\right)\\
%&=1 & \text{since} ~~i_{2}<q_2
%\end{align*}

%\item $i_{1}>0$\\%
\begin{align*}
\Phi_{p_{1}}\mathcal{T}_{u+1}\Phi_{p_{1}p_{2}}  &  =\sum_{k=0}^{p_{1}-1}%
x^{k}\left(  1+\sum_{a=0}^{i_{1}}x^{ap_{2}}\sum_{\substack{0\leq b\leq
q_{2}-1\;\;\;\;\;\text{if }a<i_{1}\\0\leq b\leq i_{2}-1\;\;\;\;\;\;\text{if
}a=i_{1}}}\ x^{bp_{1}}\left(  -x+x^{p_{1}-a}\right)  \right)  &  &  \text{from
Lemma \ref{lem:trunation} }\\
&  \;\;\;\;\;-\sum_{k=0}^{p_{1}-1}x^{k}x^{i_{1}p_{2}}x^{i_{2}p_{1}}%
\mathcal{T}_{i_{3}-i_{1}+1}\left(  x\right)  &  &  \text{{}}\\
&  =\underset{C}{\underbrace{\sum_{a=0}^{i_{1}}x^{ap_{2}}%
\underset{B}{\underbrace{\sum_{\substack{0\leq b\leq q_{2}-1\;\;\;\;\;\text{if
}a<i_{1}\\0\leq b\leq i_{2}-1\;\;\;\;\;\text{if }a=i_{1}}}x^{bp_{1}%
}\underset{A}{\underbrace{\left(  -x+x^{p_{1}-a}\right)  \sum_{k=0}^{p_{1}%
-1}x^{k}}}}}}} &  &  \text{by rearranging}\\
&  \;\;\;\;\;+\sum_{k=0}^{p_{1}-1}x^{k}-x^{i_{1}p_{2}}x^{i_{2}p_{1}%
}\mathcal{T}_{i_{3}-i_{1}-1}\left(  x\right)  \sum_{k=0}^{p_{1}-1}x^{k} &  &
\text{{}}%
\end{align*}
From now on, we will simplify $C$ by identifying and removing cancellable
terms in subexpressions, starting from $A.$

\begin{enumerate}
\item We simplify $A.$ Note%
\[
A=\left(  -x+x^{p_{1}-a}\right)  \sum_{k=0}^{p_{1}-1}x^{k}=-\sum_{k=0}%
^{p_{1}-1}x^{k+1}+\sum_{k=0}^{p_{1}-1}x^{p_{1}-a+k}=\left(  -1+x^{p_{1}%
}\right)  \sum_{k=1}^{p_{1}-1-a}x^{k}%
\]

\item We simplify $B.$ Note%
\begin{align*}
B  &  =\sum_{\substack{0\leq b\leq q_{2}-1\;\;\;\;\;\text{if }a<i_{1}\\0\leq
b\leq i_{2}-1\;\;\;\;\;\text{if }a=i_{1}}}x^{bp_{1}}A &  &  \text{}\\
&  =\sum_{\substack{0\leq b\leq q_{2}-1\;\;\;\;\;\text{if }a<i_{1}\\0\leq
b\leq i_{2}-1\;\;\;\;\;\text{if }a=i_{1}}}x^{bp_{1}}\left(  -1+x^{p_{1}%
}\right)  \sum_{k=1}^{p_{1}-1-a}x^{k} &  &  \text{from the expression of } A\\
&  =\sum_{\substack{0\leq b\leq q_{2}-1\;\;\;\;\;\text{if }a<i_{1}\\0\leq
b\leq i_{2}-1\;\;\;\;\;\text{if }a=i_{1}}}\left(  -x^{bp_{1}}+x^{\left(
b+1\right)  p_{1}}\right)  \sum_{k=1}^{p_{1}-1-a}x^{k} &  &  \text{}\\
&  =-\left(
\begin{array}
[c]{ll}%
1-x^{q_{2}p_{1}} & \text{if }a<i_{1}\\
1-x^{i_{2}p_{1}} & \text{if }a=i_{1}%
\end{array}
\right)  \sum_{k=1}^{p_{1}-1-a}x^{k} &  &  \text{ by telescoping sum}%
\end{align*}

\item We simplify $C.$ Note%
\begin{align*}
C  &  =\sum_{a=0}^{i_{1}}x^{ap_{2}}B &  &  \text{{}}\\
&  =-\sum_{a=0}^{i_{1}}x^{ap_{2}}\left(
\begin{array}
[c]{ll}%
1-x^{q_{2}p_{1}} & \text{if }a<i_{1}\\
1-x^{i_{2}p_{1}} & \text{if }a=i_{1}%
\end{array}
\right)  \sum_{k=1}^{p_{1}-1-a}x^{k}\ \  &  &  \text{from the expression of
}B\\
&  =-\sum_{a=0}^{i_{1}-1}x^{ap_{2}}\left(  1-x^{q_{2}p_{1}}\right)  \sum
_{k=1}^{p_{1}-1-a}x^{k} &  & \\
&  ~~~~~-x^{i_{1}p_{2}}\left(  1-x^{i_{2}p_{1}}\right)  \sum_{k=1}%
^{p_{1}-1-i_{1}}x^{k} &  &  \text{by rearranging}\\
&  =-\sum_{a=0}^{i_{1}-1}x^{ap_{2}}\sum_{k=1}^{p_{1}-1-a}x^{k}~~~+~~~\sum
_{a=0}^{i_{1}-1}x^{ap_{2}}x^{q_{2}p_{1}}\sum_{k=1}^{p_{1}-1-a}x^{k}\ \  &  &
\\
&  ~~~~~-x^{i_{1}p_{2}}\sum_{k=1}^{p_{1}-1-i_{1}}x^{k}~~~~~+~~~x^{i_{1}p_{2}%
}x^{i_{2}p_{1}}\sum_{k=1}^{p_{1}-1-i_{1}}x^{k} &  & \\
&  =-\sum_{a=0}^{i_{1}-1}x^{ap_{2}}\sum_{k=1}^{p_{1}-1-a}x^{k}~~~+~~~\sum
_{a=0}^{i_{1}-1}x^{(a+1)p_{2}-1}\sum_{k=1}^{p_{1}-1-a}x^{k} &  &  \text{since
}q_{2}p_{1}=p_{2}-1\\
&  ~~~~~-x^{i_{1}p_{2}}\sum_{k=1}^{p_{1}-1-i_{1}}x^{k}~~~~~+~~~x^{i_{1}p_{2}%
}x^{i_{2}p_{1}}\sum_{k=1}^{p_{1}-1-i_{1}}x^{k} &  & \\
&  =-\sum_{a=0}^{i_{1}-1}x^{ap_{2}}\sum_{k=1}^{p_{1}-1-a}x^{k}~~~+~~~\sum
_{a=1}^{i_{1}}x^{ap_{2}}\sum_{k=0}^{p_{1}-1-a}x^{k} &  &  \text{by reindexing
}a+1\text{\ with }a,\ \text{and }~k-1\text{\ with }k\\
&  ~~~~~-x^{i_{1}p_{2}}\sum_{k=1}^{p_{1}-1-i_{1}}x^{k}\ ~~~~+~~~x^{i_{1}p_{2}%
}x^{i_{2}p_{1}}\sum_{k=1}^{p_{1}-1-i_{1}}x^{k} &  & \\
&  =-\sum_{k=1}^{p_{1}-1}x^{k}\ ~~~+~~~\sum_{a=1}^{i_{1}}x^{ap_{2}%
}~~~+~~~x^{i_{1}p_{2}}x^{i_{2}p_{1}}\sum_{k=1}^{p_{1}-1-i_{1}}x^{k} &  &
\text{by telescoping sum}%
\end{align*}

\end{enumerate}

\noindent Thus
\begin{align*}
\Phi_{p_{1}}\mathcal{T}_{u+1}\Phi_{p_{1}p_{2}}  &  =-\sum_{k=1}^{p_{1}-1}%
x^{k}\ +\sum_{a=1}^{i_{1}}x^{ap_{2}}+x^{i_{1}p_{2}}x^{i_{2}p_{1}}\sum
_{k=1}^{p_{1}-1-i_{1}}x^{k} &  &  \text{from the expression}\\
&  ~~~~~+\sum_{k=0}^{p_{1}-1}x^{k}-x^{i_{1}p_{2}}x^{i_{2}p_{1}}\mathcal{T}%
_{i_{3}-i_{1}-1}\left(  x\right)  \sum_{k=0}^{p_{1}-1}x^{k} &  &  \text{in the
beginning of the proof}\\
&  =\sum_{a=0}^{i_{1}}x^{ap_{2}}~+~x^{i_{1}p_{2}}x^{i_{2}p_{1}}\sum
_{k=1}^{p_{1}-1-i_{1}}x^{k}~-~x^{i_{1}p_{2}}x^{i_{2}p_{1}}\mathcal{T}%
_{i_{3}-i_{1}+1}\left(  x\right)  \sum_{k=0}^{p_{1}-1}x^{k} &  &  \text{by
telescoping sum}%
\end{align*}

\end{proof}

\begin{lemma}
\label{lem:fi} We have%
\[
f_{i}=1+g_{i}-x^{\left(  i_{1}+1\right)  p_{2}}-x^{p_{2}}\ g_{i}%
\]
where%
\[
g_{i}=\left\{
\begin{array}
[c]{llll}%
\displaystyle+x^{i_{1}p_{2}+i_{2}p_{1}+1}\;\;\;\;\sum_{k=0}^{p_{1}-2-i_{1}} &
x^{k} & \text{if } & i_{3}\leq i_{1}\\
\displaystyle-x^{i_{1}\left(  p_{2}-1\right)  +\left(  i_{2}+1\right)  p_{1}%
}\sum_{k=0}^{i_{1}} & x^{k} & \text{if} & i_{3}>i_{1}%
\end{array}
\right.
\]

\end{lemma}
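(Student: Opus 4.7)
The plan is to derive $f_i$ by chaining together the two previous lemmas. The starting point is Lemma~\ref{lem:block}, which gives $f_i = -\Psi_{p_1p_2}\cdot\mathcal{T}_{u+1}\Phi_{p_1p_2}$, and Lemma~\ref{lem:pt}, which gives a fairly clean expression for $\Phi_{p_1}\mathcal{T}_{u+1}\Phi_{p_1p_2}$. The idea is to rewrite the factor $\Psi_{p_1p_2}$ so that $\Phi_{p_1}$ appears, allowing Lemma~\ref{lem:pt} to be substituted directly.

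First, I would use the factorization $x^{p_1p_2}-1 = \Phi_1\Phi_{p_1}\Phi_{p_2}\Phi_{p_1p_2}$, which implies
\[
\Psi_{p_1p_2} = \Phi_1\Phi_{p_1}\Phi_{p_2} = (x^{p_2}-1)\,\Phi_{p_1}.
\]
Plugging this into Lemma~\ref{lem:block} gives $f_i = (1-x^{p_2})\cdot \Phi_{p_1}\mathcal{T}_{u+1}\Phi_{p_1p_2}$. Substituting the expression from Lemma~\ref{lem:pt}, the factor $(1-x^{p_2})$ multiplies the geometric sum $\sum_{a=0}^{i_1}x^{ap_2}$ and telescopes to $1-x^{(i_1+1)p_2}$. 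This accounts for the constant term $1$ and the $-x^{(i_1+1)p_2}$ term in the claimed formula for $f_i$. The remaining piece is $(1-x^{p_2})$ times
\[
G_i \;=\; x^{i_1p_2+i_2p_1}\sum_{k=1}^{p_1-1-i_1}x^k \;-\; x^{i_1p_2+i_2p_1}\,\mathcal{T}_{i_3-i_1+1}(x)\sum_{k=0}^{p_1-1}x^k,
\]
so once I show $G_i = g_i$, the structure $f_i = 1 + g_i - x^{(i_1+1)p_2} - x^{p_2}g_i$ drops out.

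The case split on $i_3\le i_1$ versus $i_3 > i_1$ comes from evaluating $\mathcal{T}_{i_3-i_1+1}(x)$: when $i_3\le i_1$ we have $i_3-i_1+1\le 1$, so $\mathcal{T}_{i_3-i_1+1}(x)=0$, and $G_i$ reduces immediately to the first branch of $g_i$ after shifting the summation index by one; when $i_3>i_1$ we have $i_3-i_1+1\ge 2$, so $\mathcal{T}_{i_3-i_1+1}(x)=x$. In the latter case $G_i$ becomes a difference of two truncated geometric series sharing the factor $x^{i_1p_2+i_2p_1+1}$, and the difference collapses to a tail $-x^{i_1p_2+i_2p_1+1}\sum_{k=p_1-1-i_1}^{p_1-1}x^k$, which after factoring out $x^{p_1-1-i_1}$ matches the second branch of $g_i$ via the identity $i_1p_2+i_2p_1+p_1-i_1 = i_1(p_2-1)+(i_2+1)p_1$.

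The only genuine obstacle is bookkeeping: making sure the truncation $\mathcal{T}_{i_3-i_1+1}(x)$ is interpreted correctly at the boundary $i_3=i_1$ (where it gives $0$, not $x$), and correctly rewriting the collapsed tail sum in the $i_3>i_1$ case so that the exponent shift lines up with $i_1(p_2-1)+(i_2+1)p_1$. These are routine but require care. Once these two arithmetic simplifications are done, combining with the telescoped $(1-x^{p_2})\sum x^{ap_2}$ immediately yields the stated closed form for $f_i$.
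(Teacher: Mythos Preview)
Your proposal is correct and follows essentially the same approach as the paper: factor $\Psi_{p_1p_2}=(x^{p_2}-1)\Phi_{p_1}$, substitute Lemma~\ref{lem:pt}, telescope $(1-x^{p_2})\sum_{a=0}^{i_1}x^{ap_2}$ to obtain $1-x^{(i_1+1)p_2}$, and then split into the two cases $i_3\le i_1$ and $i_3>i_1$ according to whether $\mathcal{T}_{i_3-i_1+1}(x)$ is $0$ or $x$. Your organization via the auxiliary quantity $G_i$ (and showing $G_i=g_i$) is slightly cleaner than the paper's case-by-case distribution of $(x^{p_2}-1)$, but the underlying computation is identical.
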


\begin{proof}
From Lemma \ref{lem:block} and $\Psi_{p_{1}p_{2}}=\left(  x^{p_{2}}-1\right)
\Phi_{p_{1}}$ we have:
\begin{align*}
f_{i}  &  =-\Psi_{p_{1}p_{2}}\ \mathcal{T}_{u+1}\Phi_{p_{1}p_{2}}\ \  &  &
\text{{}}\\
&  =-\left(  x^{p_{2}}-1\right)  \Phi_{p_{1}}\ \mathcal{T}_{u+1}\Phi
_{p_{1}p_{2}} &  & \\
&  =-\left(  x^{p_{2}}-1\right)  \left(  \sum_{a=0}^{i_{1}}x^{ap_{2}%
}~+~x^{i_{1}p_{2}}x^{i_{2}p_{1}}\sum_{k=1}^{p_{1}-1-i_{1}}x^{k}~-~x^{i_{1}%
p_{2}}x^{i_{2}p_{1}}\mathcal{T}_{i_{3}-i_{1}+1}\left(  x\right)  \sum
_{k=0}^{p_{1}-1}x^{k}\right)  &  &  \text{from Lemma \ref{lem:pt}}\\
&  =-\left(  x^{p_{2}}-1\right)  \sum_{a=0}^{i_{1}}x^{ap_{2}}\ \ -\left(
x^{p_{2}}-1\right)  x^{i_{1}p_{2}+i_{2}p_{1}}\sum_{k=1}^{p_{1}-1-i_{1}}%
x^{k}\ \ \ \ +\left(  x^{p_{2}}-1\right)  x^{i_{1}p_{2}+i_{2}p_{1}}%
\mathcal{T}_{i_{3}-i_{1}+1}\left(  x\right)  \sum_{k=0}^{p_{1}-1}x^{k} &  &
\text{{}}\\
&  =1\ \ -x^{\left(  i_{1}+1\right)  p_{2}}\ \ -\left(  x^{p_{2}}-1\right)
x^{i_{1}p_{2}+i_{2}p_{1}}\sum_{k=1}^{p_{1}-1-i_{1}}x^{k}\ +\left(  x^{p_{2}%
}-1\right)  x^{i_{1}p_{2}+i_{2}p_{1}}\mathcal{T}_{i_{3}-i_{1}+1}\left(
x\right)  \sum_{k=0}^{p_{1}-1}x^{k} &  &  \text{by telescoping }\\
&  &  &  \text{sum}%
\end{align*}

\noindent Now we will carry out case studies. Recall $u=i_{1}(p_{2}-1)+i_{2}p_{1}%
+i_{3}=i_{1}p_{2}+i_{2}p_{1}+i_{3}-i_{1}$

\begin{enumerate}
\item Case: $i_{3}\leq i_{1}.\ \ $Note $\mathcal{T}_{i_{3}-i_{1}+1}\left(
x\right)  =0.$ Hence%
\begin{align*}
f_{i}  &  =1\ \ \ -x^{\left(  i_{1}+1\right)  p_{2}}\ \ \ -x^{i_{1}p_{2}%
}\left(  x^{p_{2}}-1\right)  x^{i_{2}p_{1}}\sum_{k=1}^{p_{1}-i_{1}-1}x^{k} &
&  \text{{}}\\
&  =1\ \ \ -x^{\left(  i_{1}+1\right)  p_{2}}\ \ \ -x^{\left(  i_{1}+1\right)
p_{2}}x^{i_{2}p_{1}}\sum_{k=1}^{p_{1}-i_{1}-1}x^{k}\ \ \ +x^{i_{1}p_{2}%
}x^{i_{2}p_{1}}\sum_{k=1}^{p_{1}-i_{1}-1}x^{k} &  &  \text{by distribution}\\
&  =1\ \ \ +x^{i_{1}p_{2}}x^{i_{2}p_{1}}\sum_{k=1}^{p_{1}-i_{1}-1}%
x^{k}\ \ \ -x^{\left(  i_{1}+1\right)  p_{2}}\ \ \ -x^{\left(  i_{1}+1\right)
p_{2}}x^{i_{2}p_{1}}\sum_{k=1}^{p_{1}-i_{1}-1}x^{k} &  &  \text{by
rearranging}\\
&  =1\ \ \ +x^{i_{1}p_{2}+i_{2}p_{1}+1}\sum_{k=0}^{p_{1}-2-i_{1}}%
x^{k}\ \ \ -x^{\left(  i_{1}+1\right)  p_{2}}\ \ \ -x^{p_{2}}x^{i_{1}%
p_{2}+i_{2}p_{1}+1}\sum_{k=0}^{p_{1}-2-i_{1}}x^{k} &  &  \text{by reindexing
}k\ \text{with }k+1\\
&  =1\ \ \ +g_{i}\ \ \ -x^{\left(  i_{1}+1\right)  p_{2}}\ \ \ -x^{p_{2}}g_{i}
&  &  \text{{}}%
\end{align*}
where%
\[
g_{i}=x^{i_{1}p_{2}+i_{2}p_{1}+1}\sum_{k=0}^{p_{1}-2-i_{1}}x^{k}%
\]

\item Case: $i_{3}>i_{1}.$ Note that $\mathcal{T}_{i_{3}-i_{1}+1}\left(
x\right)  =x.$ Hence%
\begin{align*}
f_{i} &  =1\ \ \ -x^{\left(  i_{1}+1\right)  p_{2}}\ \ \ -x^{i_{1}p_{2}%
}\left(  x^{p_{2}}-1\right)  x^{i_{2}p_{1}}\sum_{k=1}^{p_{1}-i_{1}-1}%
x^{k}\ \ +x^{i_{1}p_{2}}\left(  x^{p_{2}}-1\right)  x^{i_{2}p_{1}}\sum
_{k=0}^{p_{1}-1}x^{k+1} &  &  \text{{}}\\
&  =1\ \ \ -x^{\left(  i_{1}+1\right)  p_{2}}\ \ \ -x^{i_{1}p_{2}}\left(
x^{p_{2}}-1\right)  x^{i_{2}p_{1}}\sum_{k=1}^{p_{1}-i_{1}-1}x^{k}%
\ \ +x^{i_{1}p_{2}}\left(  x^{p_{2}}-1\right)  x^{i_{2}p_{1}}\sum_{k=1}%
^{p_{1}}x^{k} &  &  \text{by reindexing }k+1\\
&&& \text{with }k\\
&  =1\ \ \ -x^{\left(  i_{1}+1\right)  p_{2}}\ \ +x^{i_{1}p_{2}}\left(
x^{p_{2}}-1\right)  x^{i_{2}p_{1}}\sum_{k=p_{1}-i_{1}}^{p_{1}}x^{k} &  &
\text{by telescoping sum}\\
&  =1\ \ \ -x^{i_{1}p_{2}}x^{i_{2}p_{1}}\sum_{k=p_{1}-i_{1}}^{p_{1}}%
x^{k}\ \ -x^{\left(  i_{1}+1\right)  p_{2}}+x^{\left(  i_{1}+1\right)  p_{2}%
}x^{i_{2}p_{1}}\sum_{k=p_{1}-i_{1}}^{p_{1}}x^{k}\ \  &  &  \text{by
distributing}\\
&  =1\ \ \ -x^{i_{1}\left(  p_{2}-1\right)  +\left(  i_{2}+1\right)  p_{1}%
}\sum_{k=0}^{i_{1}}x^{k}\ \ -\ \ x^{\left(  i_{1}+1\right)  p_{2}%
}\ \ +x^{p_{2}}x^{i_{1}\left(  p_{2}-1\right)  +\left(  i_{2}+1\right)  p_{1}%
}\sum_{k=0}^{i_{1}}x^{k} &  &  \text{by reindexing }k\ \text{with } \\%
&&&k+p_{1}-i_{1}\\
&  =1\ \ \ +g_{i}\ \ \ -x^{\left(  i_{1}+1\right)  p_{2}}\ \ \ -x^{p_{2}}g_{i}
&  &  \text{{}}%
\end{align*}
where%
\[
g_{i}=-x^{i_{1}\left(  p_{2}-1\right)  +\left(  i_{2}+1\right)  p_{1}}%
\sum_{k=0}^{i_{1}}x^{k}%
\]

\end{enumerate}

\noindent Put together we finally have%
\[
f_{i}=1+g_{i}-x^{\left(  i_{1}+1\right)  p_{2}}-x^{p_{2}}\ g_{i}%
\]%
\[
g_{i}=\left\{
\begin{array}
[c]{llll}%
\displaystyle+x^{i_{1}p_{2}+i_{2}p_{1}+1}\;\;\;\;\sum_{k=0}^{p_{1}-2-i_{1}} &
x^{k} & \text{if } & i_{3}\leq i_{1}\\
\displaystyle-x^{i_{1}\left(  p_{2}-1\right)  +\left(  i_{2}+1\right)  p_{1}%
}\sum_{k=0}^{i_{1}} & x^{k} & \text{if} & i_{3}>i_{1}%
\end{array}
\right.
\]
\noindent
\end{proof}

\subsection{Non-overlapping and Ordered}

In the previous subsection, we found an explicit expression for $f_{i}$. In
this subsection, we show that the explicit expression is non-overlapping and ordered.

\begin{lemma}
[Non-overlapping and ordered]\label{lem:no-ordered}The explicit expression%
\[
f_{i}=1+g_{i}-x^{\left(  i_{1}+1\right)  p_{2}}-x^{p_{2}}\ g_{i}%
\]
does not have overlapping of terms and that their exponents are ordered in the
ascending order.
\end{lemma}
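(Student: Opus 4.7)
The plan is to decompose $f_i$ into the four sub-polynomials corresponding to the four summands in its closed form, namely $S_1=1$, $S_2=g_i$, $S_3=-x^{(i_1+1)p_2}$ and $S_4=-x^{p_2}g_i$, and show that their exponent sets are pairwise disjoint and form consecutive ascending runs. Since $g_i$ has the shape (monomial)\,$\cdot\sum_{k=0}^{N}x^k$ in both branches of its definition, each of $S_2$ and $S_4$ is already an ascending run of $N+1$ distinct consecutive exponents, and $S_4$ is just $S_2$ shifted by $p_2$. Consequently, the whole ordering and non-overlapping claim reduces to the single chain of strict inequalities
\[
\deg(S_1)\;<\;\min\mathrm{supp}(S_2)\;\leq\;\max\mathrm{supp}(S_2)\;<\;\deg(S_3)\;<\;\min\mathrm{supp}(S_4).
\]

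I would then split into the two cases prescribed by the definition of $g_i$. In the case $i_3\leq i_1$, $S_2$ has exponents $i_1p_2+i_2p_1+1+k$ for $k=0,\dots,p_1-2-i_1$, while $S_4$ has these exponents shifted by $p_2$. The leftmost inequality $0<i_1p_2+i_2p_1+1$ is trivial; the inequality $\max\mathrm{supp}(S_2)<(i_1+1)p_2$ reads $i_2p_1+p_1-1-i_1<p_2$, which follows from $(i_2+1)p_1\leq q_2p_1=p_2-1$; finally $(i_1+1)p_2<(i_1+1)p_2+i_2p_1+1=\min\mathrm{supp}(S_4)$ is immediate. In the case $i_3>i_1$, $S_2$ has exponents $i_1(p_2-1)+(i_2+1)p_1+k$ for $k=0,\dots,i_1$ (with negative sign), and $S_4$ is the corresponding $p_2$-shift with positive sign. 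The inequality $\max\mathrm{supp}(S_2)=i_1p_2+(i_2+1)p_1<(i_1+1)p_2$ reduces once more to $(i_2+1)p_1\leq p_2-1$, and the inequality $(i_1+1)p_2<\min\mathrm{supp}(S_4)=(i_1+1)p_2+(i_2+1)p_1-i_1$ reduces to $(i_2+1)p_1>i_1$, which holds because $i_1\leq p_1-2<p_1\leq(i_2+1)p_1$.

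There is no genuine obstacle here: the entire argument is a verification of two arithmetic inequalities per case, and the two recurring ingredients are the ranges $0\leq i_1\leq p_1-2$ and $0\leq i_2\leq q_2-1$ together with the identity $q_2p_1=p_2-1$ coming from $p_2\equiv 1\bmod p_1$. The only place where a beginner might stumble is in noticing that the $S_4$ block lies strictly to the right of $S_3$ in the second case, since both start near $(i_1+1)p_2$; this is precisely where the small inequality $(i_2+1)p_1>i_1$ is used, and it is the one step worth highlighting in the writeup.
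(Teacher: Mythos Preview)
Your proposal is correct and essentially identical to the paper's proof: the paper also labels the four sub-polynomials $S_1,\dots,S_4$, splits into the two cases $i_3\le i_1$ and $i_3>i_1$, and verifies the chain $\mathrm{tdeg}(S_{j+1})>\deg(S_j)$ using exactly the same ingredients ($i_2\le q_2-1$, $q_2p_1=p_2-1$, $i_1\le p_1-2$). Even the one step you flag as ``worth highlighting'' --- the inequality $(i_2+1)p_1>i_1$ in the second case --- matches the paper's computation line by line.
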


\begin{proof}
For this, it is convenient to name the sub-polynomials in $f_{i}$ as follows:
\[
f_{i}~~=~~~\underset{S_{1}}{\underbrace{1}}~~~\underset{S_{2}%
}{\underbrace{+g_{i}}}~~~\underset{S_{3}}{\underbrace{-x^{\left(
i_{1}+1\right)  p_{2}}}}~~~\underset{S_{4}}{\underbrace{-x^{p_{2}}\ g_{i}}}%
\]
It suffices to show that $\mathrm{tdeg}(S_{j+1})-\deg(S_{j})>0\ $for
$j=1,2,3$, where $\mathrm{tdeg}$ denotes the tail (lowest) degree.\ We will
show it for the two cases: $i_{3}\leq i_{1}$ and $i_{3}>i_{1}$.

\begin{enumerate}
\item Case $i_{3}\leq i_{1}$.

Recall
\[
g_{i}=x^{i_{1}p_{2}+i_{2}p_{1}+1}\sum_{k=0}^{p_{1}-2-i_{1}}x^{k}%
\]

Note%
\begin{align*}
\mathrm{tdeg}(S_{2})-\deg(S_{1}) &  =\left(  i_{1}p_{2}+i_{2}p_{1}+1\right)
-\left(  0\right)   &  & \\
&  =i_{1}p_{2}+i_{2}p_{1}+1 &  & \\
&  >0 &  & \\
\mathrm{tdeg}(S_{3})-\deg(S_{2}) &  =\left(  \left(  i_{1}+1\right)
p_{2}\right)  -\left(  i_{1}p_{2}+i_{2}p_{1}+1+p_{1}-2-i_{1}\right)   &  & \\
&  =1+i_{1}+p_{2}-\left(  i_{2}+1\right)  p_{1} &  & \\
&  \geq1+i_{1}+p_{2}-q_{2}p_{1} &  &  \text{since }i_{2}\leq q_{2}-1\\
&  =1+i_{1}+1 &  &  \text{since }p_{2}=p_{1}q_{2}+1\\
&  >0 &  & \\
\mathrm{tdeg}(S_{4})-\deg(S_{3}) &  =\left(  p_{2}+i_{1}p_{2}+i_{2}%
p_{1}+1\right)  -\left(  \left(  i_{1}+1\right)  p_{2}\right)   &  & \\
&  =i_{2}p_{1}+1 &  & \\
&  >0 &  &
\end{align*}

\item Case $i_{3}>i_{1}$.

Recall
\[
g_{i}=-x^{i_{1}\left(  p_{2}-1\right)  +\left(  i_{2}+1\right)  p_{1}}%
\sum_{k=0}^{i_{1}}x^{k}%
\]
Note%
\begin{align*}
\mathrm{tdeg}(S_{2})-\deg(S_{1}) &  =\left(  i_{1}\left(  p_{2}-1\right)
+\left(  i_{2}+1\right)  p_{1}\right)  -\left(  0\right)   &  & \\
&  =i_{1}\left(  p_{2}-1\right)  +\left(  i_{2}+1\right)  p_{1} &  & \\
&  >0 &  & \\
\mathrm{tdeg}(S_{3})-\deg(S_{2}) &  =\left(  \left(  i_{1}+1\right)
p_{2}\right)  -\left(  i_{1}\left(  p_{2}-1\right)  +\left(  i_{2}+1\right)
p_{1}+i_{1}\right)   &  & \\
&  =p_{2}-\left(  i_{2}+1\right)  p_{1} &  & \\
&  \geq p_{2}-q_{2}p_{1} &  &  \text{since }i_{2}\leq q_{2}-1\\
&  =1 &  & \\
&  >0 &  & \\
\mathrm{tdeg}(S_{4})-\deg(S_{3}) &  =\left(  p_{2}+i_{1}\left(  p_{2}%
-1\right)  +\left(  i_{2}+1\right)  p_{1}\right)  -\left(  \left(
i_{1}+1\right)  p_{2}\right)   &  & \\
&  =\left(  i_{2}+1\right)  p_{1}-i_{1} &  & \\
&  \geq\left(  i_{2}+1\right)  p_{1}-\left(  p_{1}-2\right)   &  &
\text{since }i_{1}\leq p_{2}-2\\
&  =i_{2}p_{1}+2 &  & \\
&  >0 &  &
\end{align*}

\end{enumerate}

\noindent Hence, the explicit expression $f_{i}$ does not have overlapping of
terms and that their exponents are ordered in the ascending order.
\end{proof}

\subsection{Proof of Main result (Theorem \ref{thm:exp})}

Finally we are ready to prove the main result (Theorem \ref{thm:exp}). We will
prove it by combining several lemmas proved in the previous subsections.

\begin{proof}
[Proof of main result (Theorem\ref{thm:exp} )]Let $p_{2}\equiv
1\operatorname{mod}p_{1}$ and $p_{3}\equiv1\operatorname{mod}p_{1}p_{2}$. We
need to prove two claims $\mathsf{C1}$ and $\mathsf{C2}.$

\begin{enumerate}
\item[\textsf{C1.}] From Lemmas \ref{lem:partition} and \ref{lem:fi}, we have%
\begin{align*}
\Phi_{p_{1}p_{2}p_{3}}  &  =\sum_{i\in I}f_{i}\ x^{i\cdot\rho}%
\ \ +\ \ x^{\varphi\left(  p_{1}p_{2}p_{3}\right)  }\\
f_{i}  &  =1+g_{i}-x^{\left(  i_{1}+1\right)  p_{2}}-x^{p_{2}}\ g_{i}\\
g_{i}  &  =\left\{
\begin{array}
[c]{llll}%
\displaystyle+x^{i_{1}p_{2}+i_{2}p_{1}+1}\;\;\;\;\sum_{k=0}^{p_{1}-2-i_{1}} &
x^{k} & \text{if } & i_{3}\leq i_{1}\\
\displaystyle-x^{i_{1}\left(  p_{2}-1\right)  +\left(  i_{2}+1\right)  p_{1}%
}\sum_{k=0}^{i_{1}} & x^{k} & \text{if} & i_{3}>i_{1}%
\end{array}
\right.
\end{align*}
where%
\begin{align*}
I  &  =\left\{  0,\ldots,p_{1}-2\right\}  \times\left\{  0,\ldots
,q_{2}-1\right\}  \times\left\{  0,\ldots,p_{1}-1\right\}  \times\left\{
0,\ldots,q_{3}-1\right\} \\
\rho &  =\left[  \left(  p_{2}-1\right)  \left(  p_{3}-1\right)
,\ p_{1}\left(  p_{3}-1\right)  ,\ p_{3}-1,\ p_{1}p_{2}\right]
\end{align*}

\item[\textsf{C2.}] From Lemma \ref{lem:no-ordered}, the explicit expression
for $f_{i}$ does not have overlapping and their exponents are ordered.
Examining Lemma \ref{lem:partition}, one immediately sees that the partition
does not introduce overlapping or reordering of terms. Thus we conclude that
the above expression in $\mathsf{C1}$ does not have any overlapping of terms
and their exponents are ordered in the ascending order when $\sum_{i\in I}$ is
carried out as $\sum_{i_{1}}\sum_{i_{2}}\sum_{i_{3}}\sum_{i_{4}}$.
\end{enumerate}
\end{proof}

\bibliographystyle{unsrt}
\bibliography{../../References/all}
{}

\end{document}